\def\Q{\mathbb{Q}}
\def\R{\mathbb{R}}
\def\C{\mathbb{C}}
\def\Hom{\mathrm{Hom}}
\def\GL{\mathrm{GL}}
\def \cs {\mathfrak{s}}
\def \sort {\mathcal{S}}
\def\cc{\mathrm{cc}}
\let\myacute=\'
\def\lri{\mathcal{O}}
\def\isom{\cong}
\def\ra{\longrightarrow}
\def\<{\langle}
\def\>{\rangle}
\def\N{\mathbb{N}}
\def\Z{\mathbb{Z}}
\def\lfi{K}
\def\lri{\mathfrak{o}}
\def\gri{\mathcal{O}}
\def\maxideal{\mathfrak{p}}
\def\resfield{\mathfrak{K}}
\def\Mat{\mathrm{Mat}}
\def\ac{\overline{\mathrm{ac}}}
\def \Qp {\mathbb{Q}_p}
\def\vf{\mathrm{Denef}\text{-}\mathrm{Pas}}
\def\vfp{\mathrm{Denef}\text{-}\mathrm{Pas},\mathrm{P}}
\def\hac{\mathrm{H,ac}}
\newcommand{\kGq}[1]{k_{#1}(q)}
\newcommand{\tqalpha}[1]{k^{#1}(q)}
\newtheorem{thmABC}{Theorem}
\newenvironment{Questionnn}{\noindent{\bf Question.}
  \nonumber \it}{\bigskip}
\def \begindm {\begin{displaymath}}
\def \enddm {\end{displaymath}}
\def \naturals {\mathbb{N}}
\def \stab {\ensuremath{\mathrm{Stab}}}
\def \suchthat {\ensuremath{:}}
\def\C{\mathbb{C}}
\def\Q{\mathbb{Q}}
\def\cL{\mathcal{L}}
\def\cT{\mathcal{T}}
\newtheorem{theo}{Theorem}[section]
\newtheorem{lemma}[theo]{Lemma}
\newtheorem{cor}[theo]{Corollary}
\newtheorem{prop}[theo]{Proposition}
\theoremstyle{definition}
\newtheorem{Def}[theo]{Definition}
\numberwithin{equation}{section}
\long\def\symbolfootnote[#1]#2{\begingroup%
\def\thefootnote{\fnsymbol{footnote}}\footnote[#1]{#2}\endgroup}
\title{Uniform cell decomposition with applications to Chevalley groups}
\author{Mark N. Berman${}^{\S}$, Jamshid Derakhshan${}^\dag$, Uri Onn${}^{*}$ and Pirita Paajanen${}^{ \ddag}$}
\thanks{
${}^\S$ Supported by the Skirball Foundation via the Center for Advanced Studies in Mathematics at Ben-Gurion University of the Negev;
${}^\dag$Supported by the Center for Advanced Studies in Mathematics at Ben Gurion University;
${}^{*}$ Supported by Israel Science Foundation (ISF);
${}^\ddag$ Supported by an Academy of Finland Postdoctoral Fellowship.}
\address{Department of Mathematics and Applied Mathematics, University of Cape Town,
Rondebosch 7701 South Africa}
\email{mark.berman@uct.ac.za}
\address{Mathematical Institute, University of Oxford,
24-29 St. Giles, Oxford OX1 3LB, UK}
\email{derakhsh@maths.ox.ac.uk}
\address{Department of Mathematics, Ben Gurion
  University of the Negev, Beer-Sheva 84105 Israel}
\email{urionn@math.bgu.ac.il}
\address{Department of Mathematics and Statistics, University of Helsinki, P.O. Box 68, FI-00014 University of Helsinki, Finland}
\email{pirita.paajanen@helsinki.fi}
\begin{document}

\keywords{Chevalley groups, Conjugacy classes, $\mathfrak{p}$-adic
integration, Denef-Pas language, Cell decomposition, Quantifier
elimination}

\subjclass[2000]{20G25, 03C10, 03C98, 11M41}

\begin{abstract} We express integrals of definable functions over definable sets uniformly
for non-Archimedean local fields, extending results of Pas. We
apply this to \nobreak{Chevalley} groups, in particular proving
that zeta functions counting conjugacy classes in congruence
quotients of such groups depend only on the size of the residue
field, for sufficiently large residue characteristic. In
particular, the number of conjugacy classes in a congruence
quotient depends only on the size of the residue field. The same
holds for zeta functions counting dimensions of Hecke modules of
intertwining operators associated to induced representations of
such quotients.
\end{abstract}

\maketitle

\section{Introduction}\label{section-introduction}

We provide a framework for studying integrals over Chevalley
groups which makes them amenable to model-theoretic tools. This
has various applications to counting problems associated with
Chevalley groups. Let $G$ be a Chevalley group defined over $\Z$ and
let $\lfi$ be a non-Archimedean local field with valuation ring
$\lri$, maximal ideal $\maxideal$, absolute value $|~|$ and
residue field $\resfield$ of size $q$. Suppose we have a counting
problem related to $G(\lri)$, given by a sequence of natural
numbers $A=(a_m)$, which we encode in a Poincar\myacute{e} series
$\zeta_A(s)=\sum_{m=0}^{\infty} a_m q^{-ms}$. In the cases of
interest to us, this generating function is expressible by an
integral $Z_A(s)=\int_{G(\lri)}|f(g)|^s d\mu_{G(K)}(g)$, where $f$
is a function which is definable in the Denef-Pas language for
Henselian valued fields (see Section \ref{MT1}), and $\mu_{G(K)}$
is the normalised Haar measure on $G(K)$. Following the Ax-Kochen
philosophy we show that the integral $Z_A(s)$ is unchanged if we
replace the field $K$ by a field $K'$ with the same residue field,
provided the residue characteristic is sufficiently large. This is
reflected in the corresponding counting problems under
consideration. The following theorem allows us to transform the
problem into a model-theoretic framework.

\begin{thmABC}\label{theorem-coord} Let $G$ be a Chevalley group defined over $\Z$. There exist (explicitly computed)
rational maps $\iota: \mathbb{A}^{\dim G} \to G$ and $\alpha:
\mathbb{A}^{\dim G} \to \mathbb{A}$ such that the following holds.
Let $K$ be a non-Archimedean local field  with valuation ring
$\lri$. Let $dx$ denote the additive Haar measure on $K^{\dim G}$
normalized on $\lri^{\dim G}$. Then the measure $\mu$ given by
$$\int_{G(K)} f(g) d\mu:=\int_{K^{\dim G}} |\alpha(x)| f(\iota(x)) dx,$$
as $f$ runs through all complex valued Borel functions on $G(K)$,
is a left and right Haar measure on $G(K)$ normalized on
$G(\lri)$.
\end{thmABC}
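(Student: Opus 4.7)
The plan is to construct $\iota$ from the Bruhat big-cell decomposition of $G$, and to take $\alpha$ to be the Jacobian of this parametrization relative to an integral left-invariant top differential form on $G$.

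Fix a split maximal torus $T \cong \mathbb{G}_m^r$, a Borel $B = TU$ with unipotent radical $U$, and the opposite unipotent radical $U^-$, all defined over $\mathbb{Z}$. Let $\Phi^+$ denote the set of positive roots, so that $\dim G = 2|\Phi^+| + r$. Chevalley's structure theory provides closed immersions $U, U^- \hookrightarrow G$ over $\mathbb{Z}$ which, after ordering $\Phi^+$ and using root subgroup coordinates, identify each with $\mathbb{A}^{|\Phi^+|}_{\mathbb{Z}}$; a choice of basis of the cocharacter lattice identifies $T$ with $\mathbb{G}_m^r \subset \mathbb{A}^r$. By the big-cell theorem, the multiplication map $m : U^- \times T \times U \to G$ is an open immersion onto $\Omega := U^- T U$. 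Composing, I obtain the rational map
\[
\iota(x,t,y) = u^-(x) \cdot t \cdot u(y) : \mathbb{A}^{\dim G} \dashrightarrow G,
\]
defined precisely on the locus $\{t_1 \cdots t_r \neq 0\}$, with image $\Omega$.

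To identify $\alpha$, I fix a nonzero left-invariant top form $\omega_G$ on $G$ integral over $\mathbb{Z}$ (unique up to sign, since $G$ is split and semisimple). Let $\delta_B(t) = \prod_{\beta \in \Phi^+} \beta(t)$ be the modular character of $B$, which records the determinant of the adjoint action of $T$ on $\mathrm{Lie}(U)$. Pulling $\omega_G$ back along $m$ and tracking the adjoint correction needed to conjugate past $T$ yields
\[
\iota^* \omega_G \;=\; \pm\, \frac{\delta_B(t)}{t_1 \cdots t_r} \, dx_1 \wedge \cdots \wedge dx_{|\Phi^+|} \wedge dt_1 \wedge \cdots \wedge dt_r \wedge dy_1 \wedge \cdots \wedge dy_{|\Phi^+|}.
\]
I therefore set $\alpha(x,t,y) := \delta_B(t)/(t_1 \cdots t_r)$, a rational function on $\mathbb{A}^{\dim G}$. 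As a sanity check on $\SL_2$, one has $r=1$ and $\beta(t_1)=t_1^2$ for the unique positive root, so $\alpha = t_1$, matching the direct calculation that in these coordinates the Haar measure is $|t_1|\, dx\, dt_1\, dy$.

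The integral identity then follows by change of variables: the loci $\{t_1 \cdots t_r = 0\} \subset K^{\dim G}$ and $G(K) \setminus \Omega(K)$ are each cut out by a nonzero polynomial and hence carry Haar measure zero, so they may be discarded; on the complement $\iota$ is a bijection and the Jacobian computed above applies. The measure $|\omega_G|$ is left Haar by construction, and it is right Haar since reductive groups are unimodular. Normalization on $G(\lri)$ is ensured by taking $\omega_G$ to be an integral generator of the line of left-invariant top forms on the smooth affine $\mathbb{Z}$-scheme $G$ (a gauge form in Weil's sense). The principal technical obstacle is the Jacobian computation itself: one must carefully track the Chevalley root space decomposition to verify that the adjoint action of $T$ on $\mathrm{Lie}(U^\pm)$, together with the multiplicative-to-additive conversion on $T$, produces exactly the factor $\delta_B(t)/(t_1 \cdots t_r)$.
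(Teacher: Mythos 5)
Your route --- parametrize the big cell $U^-TU$ and read off the density of Haar measure from the pullback of an invariant gauge form --- is genuinely different from the paper's. The paper instead assembles the Haar measure on $G(K)$ from Haar measures on $U^-$, $T$ and $U$ via Knapp's theorem on open product decompositions $S\times T\to G$, and obtains the density by computing the modular function of $U^-T$, i.e.\ the modulus of conjugation of $T$ on $U^-$, directly from Steinberg's relation $h_\beta(b)x_\alpha(a)h_\beta(b)^{-1}=x_\alpha(b^{\langle\alpha,\beta\rangle}a)$. That reduction makes the ``Jacobian'' entirely elementary, whereas in your write-up the identity $\iota^*\omega_G=\pm\,\delta_B(t)(t_1\cdots t_r)^{-1}\,dx\wedge dt\wedge dy$ --- which is the whole content of the theorem --- is asserted and explicitly deferred. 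The density you state does match what the group-theoretic computation yields (your $\SL_2$ check is correct; note the paper's Theorem 5.2 prints the exponent as $-\langle\rho,\beta\rangle-1$ where yours is $+\langle\rho,\beta\rangle-1$, a discrepancy that is invisible in the paper's applications since only the constant on the Iwahori subgroup and definability are used downstream).

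The genuine gap is the normalization. An integral generator $\omega_G$ of the invariant top forms does \emph{not} give $G(\lri)$ measure $1$: by Weil's theorem $\int_{G(\lri)}|\omega_G|=|G(\resfield)|\,q^{-\dim G}=k_G(q)$, which already for $\SL_2$ equals $1-q^{-2}$. So your $\mu$ is a bi-invariant Haar measure but is not ``normalized on $G(\lri)$'' as required; one must divide by $k_G(q)$, and since this constant depends on the residue field it cannot be absorbed into a field-independent rational map $\alpha$. The paper's precise version (Theorem 5.2) carries the factor $k_G(q)^{-1}$ explicitly and verifies the normalization by computing the measure of an Iwahori subgroup --- whose preimage under the parametrization is the box $\{v(a_i)>0,\ v(b_j)=0,\ v(c_k)\ge 0\}$, on which the density is identically $1$ --- and then multiplying by the index $[G(\lri):I(\lri)]=|G(\resfield)|/|B(\resfield)|$. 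You need this (or an equivalent) computation to complete the proof.
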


Theorem~\ref{theorem-coord} shifts the focus to integrals whose
domain of integration and integrand are definable in a suitable language.
We use
$\cL_{\vf}$, the Denef-Pas language for Henselian valued fields
(see \cite{Pas}), which entails a valued field sort, a residue
field sort and an ordered abelian group sort (the latter with top
element), equipped with function symbols interpreted as valuation
and angular component. For more details, see Section~\ref{MT1}
below. The following theorem gives a uniform formula for such
integrals.

\begin{thmABC}\label{theorem-model-theory} Let $x$ be an $m$-tuple of valued field variables.
Let $\psi(x)$ be an $\cL_{\vf}$-formula and let $\Phi(x)$ be an
$\cL_{\vf}$-definable function into the valued field. Assume there
is some $\sigma_0\in \R$ such that the integral
$\int_{W_K}|\Phi(x)|^s dx$ converges for all $s\in \C$ with
$\text{Re}(s) > \sigma_0$ for all non-Archimedean local fields $K$
of sufficiently large residue characteristic, where $W_K=\{x \in
K^m \suchthat K\models \psi(x) \}$. Then there exist a constant
$N$, rational functions $R_1(X,Y),\dots,R_k(X,Y)$ over $\Z$, and
formulae $\psi_1(x),\dots,\psi_k(x)$ in the language of rings such
that for any non-Archimedean local field $K$ with residue field
$\resfield$ of characteristic at least $N$, and for all $s\in \C$
with $\text{Re}(s) > \sigma_0$, we have
\begin{equation}\label{eqn.theorem.B}
\int_{W_K} |\Phi(x)|^s
dx=m_1(\resfield)R_1(q,q^{-s})+\dots+m_k(\resfield)R_k(q,q^{-s}),
\end{equation}
where $m_i(\resfield)$ denotes the cardinality of the set of
$\resfield$-points of $\psi_i(x)$, $q$ is the cardinality of the
residue field $\resfield$, and $dx$ denotes the Haar measure on
$K^m$ normalized such that $\lri^m$ has measure $1$.
\end{thmABC}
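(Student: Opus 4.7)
The plan is to apply Pas's cell decomposition theorem to partition $W_K$ into finitely many cells in the Denef-Pas language on which the integrand takes a standard monomial form, and then to evaluate the integral over each cell as an explicit geometric sum.

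First I would apply Pas's cell decomposition simultaneously to the formula $\psi(x)$ and to the function $\Phi(x)$, obtaining an $\cL_{\vf}$-definable partition of $W_K$ into finitely many cells $C_1,\dots,C_r$. Each cell $C_j$ is parameterised by a tuple of residue field variables $\bar\xi$, whose admissible values are cut out by an $\cL_{\rings}$-formula $\psi_j(\bar\xi)$, together with a tuple of integer variables $\bar n$ constrained to lie in a rational polyhedron $P_j\subseteq\Z^{l_j}$. On $C_j$ the function $\Phi$ has the standard form $\Phi(x)=u(x)\pi^{e_j(\bar n)}$ for a unit $u(x)$ of $\lri$ whose angular component is prescribed by $\bar\xi$, and an affine-linear function $e_j$ of the integer coordinates.

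Second I would perform the change of variables on each cell and integrate. The contribution of $C_j$ to the integral factors as
$$\int_{C_j} |\Phi(x)|^s\, dx \;=\; m_j(\resfield)\cdot\sum_{\bar n\in P_j} q^{-s\,e_j(\bar n)-L_j(\bar n)},$$
where $m_j(\resfield)$ counts residue field tuples satisfying $\psi_j$ and $L_j$ is a linear form coming from the Jacobian of the coordinate change and the Haar measure normalisation. The convergence assumption on $\int_{W_K}|\Phi(x)|^s\,dx$ for $\mathrm{Re}(s)>\sigma_0$ guarantees that each geometric sum converges absolutely in that range and evaluates to a rational function $R_j(q,q^{-s})\in\Q(q,q^{-s})$; this is a standard computation of a sum of exponentials over the lattice points of a rational polyhedron. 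Summing the finitely many contributions yields formula~\eqref{eqn.theorem.B}.

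The main obstacle, and the heart of the theorem, is proving uniformity in the field $K$. This is precisely the content of Pas's quantifier elimination and cell decomposition theorems in $\cL_{\vf}$: the cells $C_j$, the ring-formulas $\psi_j$, and the rational functions $R_j(X,Y)$ are produced uniformly from $\psi$ and $\Phi$, and their interpretation in any non-Archimedean local field with residue characteristic at least some constant $N$ yields the same decomposition and the same evaluation. The constant $N$ arises from the finitely many polynomials appearing in the cell decomposition, whose leading behaviour and the Hensel-type arguments used to produce the decomposition must remain valid modulo the residue characteristic; once $p>N$ this holds uniformly, completing the proof.
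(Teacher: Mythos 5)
Your overall strategy coincides with the paper's: use Pas-style cell decomposition and quantifier elimination to reduce $\int_{W_K}|\Phi(x)|^s\,dx$ to residue-field point counts multiplied by geometric sums over the value group, and evaluate the latter as rational functions in $q$ and $q^{-s}$. However, two of the steps you treat as immediate are exactly where the work lies, and one of them is a genuine gap as written.

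First, the structural point. The Denef--Pas cell decomposition prepares the data with respect to a \emph{single} distinguished valued-field variable $t$; it does not in one stroke produce a partition of $W_K\subseteq K^m$ into pieces on which $\Phi$ is monomial in integer parameters with no valued-field variables left. To reach the form you describe one must iterate: integrate out one valued-field variable at a time, each step introducing fresh residue-field and value-group parameters (this is Theorem \ref{lem-int}, applied $m$ times after first removing valued-field quantifiers via Theorem \ref{qe}). Relatedly, the factorization of a cell's contribution as $m_j(\resfield)$ times a lattice sum is not automatic: after the iteration one has a quantifier-free condition mixing residue-field and value-group variables, and one needs the orthogonality statement (Pas' Lemma 5.3) splitting it into a disjunction of conjunctions $\chi_i(\xi)\wedge\theta_i(l)$ before the sum factors into the form \eqref{eqn.theorem.B}. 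These are standard but should be said.

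Second, and more seriously: Pas' cell decomposition and quantifier elimination are proved for Henselian valued fields of \emph{residue characteristic zero} (the theory $\cT_{\hac,0}$), so you cannot simply ``interpret the cells in any non-Archimedean local field of residue characteristic at least $N$'' by citing Pas --- local fields have positive residue characteristic and are not models of that theory. The constant $N$ does not arise from tracking Hensel-type conditions modulo $p$ in the construction; it arises from a compactness/ultraproduct argument (Lemma \ref{compactness} and Theorems \ref{cd}, \ref{qe}): the cell decomposition and quantifier elimination are first-order consequences of $\cT_{\hac,0}$, and any sentence true in all its models holds in all local fields of sufficiently large residue characteristic, since otherwise an ultraproduct of counterexamples would be a residue-characteristic-zero model violating it. This transfer is the main content of the uniformity claim --- in particular it is what makes the theorem apply to $\F_q((t))$ as well as to $p$-adic fields --- and your proposal does not supply it. A last small point: Lemma \ref{press2} yields the rational-function identity only on a real open set where the sums converge, and one must invoke holomorphy of both sides to extend it to the whole half-plane $\mathrm{Re}(s)>\sigma_0$.
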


In model theoretic language, the sum in (\ref{eqn.theorem.B}) is
known as a uniform expression for the integral. We shall deduce a
similar result for integrals of the form $\int_{W_K}
q^{-\Theta(x)s} dx$ where $\Theta(x)$ is a definable function from
$K^m$ into the value group $\Z$ (Corollary \ref{cor-of-B}). 
Theorem~\ref{theorem-model-theory} is a transfer principle for values of the
integrals stating that the value depends only on the size of the residue field. For the fields $\Qp$, it follows from the
proof of Theorem 5.1 in Pas \cite{Pas}, for almost all $p$. To prove Theorem~\ref{theorem-model-theory} 
we use Pas' methods and generalize his results to non-Archimedean local
fields. 

The use of model theory to evaluate $\maxideal$-adic integrals of
the form $\int_{\psi(K)} |f(x)|^s dx$, where $\psi(K)$ denotes the
definable subset of $K^m$ for a $\maxideal$-adic field $K$ defined
by a formula $\psi(x)$ of the language of valued fields, and
$f(x)$ denotes a definable function, originated in the work of
Denef. Theorem~\ref{theorem-model-theory} can also be deduced from the 
motivic rationality theorem of 
Cluckers-Loeser \cite[Theorem~14.4.1]{CL1} combined with their specialization principle 
\cite[Theorem~9.1.5]{CL1}. An extension to the wider setting of constructible 
exponential motivic functions can be deduced from the Cluckers-Loeser motivic transfer principle 
\cite[Theorem~9.2.4]{CL2}.
It is also possible to deduce Theorem~\ref{theorem-model-theory} from the work of Denef-Loeser, via the 
techniques in the proof of \cite[Theorem 8.5.1]{DL}. A uniform expression for similar integrals is
proved by Macintyre in \cite{Macintyre}. 

Our first application of Theorem~\ref{theorem-model-theory}
concerns a zeta function counting conjugacy classes.  Given a
Chevalley group $G$ with an embedding into $\GL_d$, consider the
congruence subgroups
$$G^m(\lri):=\mathrm{Ker}(G(\lri)\to \mathrm{GL}_d(\lri/\maxideal^m))$$
of $G(\lri)$. For each $m \in \N$, let $c_m$ denote the the number
of conjugacy classes in the congruence quotient $G(\lri,m):=
G(\lri)/G^m(\lri)\cong G(\lri/\maxideal^m)$. Following du Sautoy
\cite{dScc}, we define the conjugacy class zeta function of
$G(\lri)$ by
$$\zeta^{\cc}_{G(\lri)}(s):=\sum_{m=0}^\infty c_m
q^{-ms}.$$

\begin{thmABC}\label{theorem-cc}
Let $G$ be a Chevalley group. There exist formulae
$\psi_1(x),\dots,\psi_{k}(x)$ in the language of rings, rational
functions  $R_1(X,Y)$,...,$R_{k}(X,Y)$ over $\Z$ and a constant
$N$, such that for all complete discrete valuation rings $\lri$
with finite residue field $\resfield$ of characteristic $p$ and
cardinality $q$, where $p>N$, and for all $s\in \C$ with
$\text{Re}(s)> \sigma_0$ for some $\sigma_0\in \R$, the conjugacy
class zeta function depends only on $q$ and can be written as
$$\zeta^{\cc}_{G(\lri)}(s)=m_1(\resfield)R_1(q,q^{-s})+\dots+m_k(\resfield)R_k(q,q^{-s}),$$
where $m_i(\resfield)$ denotes the cardinality of the set of
$\resfield$-points of $\psi_i(x)$, for $1\leq i \leq k$. In
particular, if $\lri$ and $\lri'$ are complete discrete valuation
rings with the same finite residue field of characteristic larger
than $N$, then $G(\lri)$ and $G(\lri')$ have the same conjugacy
zeta function. Consequently, the number of conjugacy classes in
the groups $G(\lri,m)$ and $G(\lri',m)$ is the same for all $m$.
\end{thmABC}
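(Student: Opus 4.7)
The plan is to express $\zeta^{\cc}_{G(\lri)}(s)$ as a single $\maxideal$-adic integral over $G(\lri)^2$ and then apply Theorems~\ref{theorem-coord} and~\ref{theorem-model-theory}. Let $N_m=|G(\lri,m)|$ and let $\mu$ be the Haar measure on $G(\lri)$ with $\mu(G(\lri))=1$. Burnside's orbit-counting lemma applied to the conjugation action of $G(\lri,m)$ on itself, together with the normality of $G^m(\lri)$ in $G(\lri)$, yields
$$c_m = N_m\cdot(\mu\times\mu)\bigl(\{(g,h)\in G(\lri)^2:[g,h]\in G^m(\lri)\}\bigr).$$
Under the fixed embedding $G\hookrightarrow\GL_d$, the condition $[g,h]\in G^m(\lri)$ becomes $v([g,h]-I)\geq m$, with $v$ of a matrix denoting the minimum valuation of its entries. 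Smoothness of the Chevalley scheme $G$ over $\lri$ gives $N_m=N_1\,q^{(m-1)\dim G}$ for $m\geq 1$, where $N_1=|G(\resfield)|$ is a polynomial in $q$. Summing the geometric series in $m$ (legitimate for $\mathrm{Re}(s)>\dim G$ by Fubini with uniform geometric domination) produces
$$\zeta^{\cc}_{G(\lri)}(s)=1+\frac{N_1\,q^{-s}}{1-q^{\dim G-s}}\left(1-\int_{G(\lri)^2}q^{-v([g,h]-I)(s-\dim G)}\,d\mu\,d\mu\right).$$

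Applying Theorem~\ref{theorem-coord} twice converts the integral into one over $\lri^{2\dim G}$ with Jacobian $|\alpha(x)\alpha(y)|$, so the full integrand takes the form $q^{-\Theta_0(x,y)-\Theta_1(x,y)(s-\dim G)}$, where $\Theta_0(x,y)=v(\alpha(x)\alpha(y))$ and $\Theta_1(x,y)=v([\iota(x),\iota(y)]-I)$ are $\cL_{\vf}$-definable functions into the value group (definability of $\Theta_1$ reduces to a finite disjunction over which entry of $[\iota(x),\iota(y)]-I$ realises the minimum valuation). An integrand of this type falls within the scope of the methods underlying Theorem~\ref{theorem-model-theory} and Corollary~\ref{cor-of-B}: a two-parameter version of the corollary applied to $q^{-\Theta_0 s_0-\Theta_1 s_1}$ and then specialised at $s_0=1,\ s_1=s-\dim G$ (equivalently, a definable cell decomposition of $\lri^{2\dim G}$ on which $\Theta_0$ and $\Theta_1$ are controlled, followed by summation of the resulting geometric contributions) produces a uniform expression $\sum_i m_i(\resfield)R_i(q,q^{-s})$ valid once the residue characteristic exceeds some $N$ and $\mathrm{Re}(s)>\sigma_0$. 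Multiplying by the rational prefactor above gives the claimed formula for $\zeta^{\cc}_{G(\lri)}(s)$, and since any two finite fields of the same cardinality are isomorphic, each $m_i(\resfield)$, hence each coefficient $c_m$, depends only on $q$.

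The main obstacle is not any single step but the uniformity across $K$: one must simultaneously ensure (i) validity of $N_m=N_1q^{(m-1)\dim G}$, for which it suffices that the standard $\log/\exp$ bijection $G^1(\lri)\leftrightarrow\maxideal\mathfrak{g}(\lri)$ applies, fixing a lower bound on $p$ coming from convergence of the Baker--Campbell--Hausdorff series; (ii) definability of the min-of-entries valuation $\Theta_1$, handled by a finite definable partition of $\lri^{2\dim G}$ according to which entry achieves the minimum; and (iii) uniform convergence of the sum-integral swap for $\mathrm{Re}(s)>\sigma_0$, which follows from a residue-field-independent geometric bound on $(\mu\times\mu)(\{v([g,h]-I)\geq m\})$ reflecting the dimension of the commuting variety in $G\times G$. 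None of these is a conceptual difficulty, but together they determine the constants $N$ and $\sigma_0$ appearing in the statement.
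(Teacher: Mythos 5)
Your proposal follows essentially the same route as the paper: Burnside's lemma gives $c_m = |G(\lri,m)|\,\nu(W_m)$, smoothness gives $|G(\lri,m)|=|G(\resfield)|\,q^{(m-1)\dim G}$ (the paper's Lemma~\ref{lemma-index-dimension-formula}), summing the geometric series yields exactly the identity of Proposition~\ref{proposition-counting-cc}, and the resulting definable integral is processed through Theorems~\ref{theorem-coord} and \ref{theorem-model-theory} just as in the paper. The one caveat is your parenthetical appeal to the $\log/\exp$ bijection to justify point (i): that argument is unavailable over $\F_p[[t]]$ (the exponential series does not make sense in positive residue characteristic without nilpotency), and since the theorem is asserted for all complete discrete valuation rings including those of positive characteristic, you should rely on the smoothness/Hensel argument you already stated, which requires no restriction on $p$.
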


The rationality of the conjugacy zeta function
$\zeta^{\cc}_{G(\Z_p)}(s)$ was proved by du Sautoy \cite{dScc} for
$G$ a compact $p$-adic analytic group. The novelty of
Theorem~\ref{theorem-cc} is that it holds also in positive
characteristic and moreover that the resulting zeta function
depends only on the size of the residue field.

Our second application is of interest in the study of Hecke
modules of intertwining operators of induced complex representations of
Chevalley groups, which are described by means of suitable double
coset spaces. Let $H$ be a finite group and let $Q_1,Q_2$ be
subgroups of $H$. We write $\rho_i=\mathbf{1}_{Q_i}^H$ for the
representation obtained by inducing the trivial representation
of $Q_i$ to $H$. The space $\Hom_H(\rho_1,\rho_2)$, which we refer
to as the {\em Hecke module of intertwining operators}, of
$H$-maps between the corresponding representation spaces, is a
bi-module with a left $\mathrm{End}_H(\rho_1)$-action and right
$\mathrm{End}_H(\rho_2)$-action, where $\mathrm{End}_H(\rho)=\mathrm{Hom}_H(\rho,\rho)$. It admits a geometric description
as the space of $Q_2$-$Q_1$ bi-invariant functions $\C[Q_2
\backslash H /Q_1]$ and an algebraic description given by
$\oplus_{\sigma \in \mathrm{Irr}(H)}
\Mat(n_{1,\sigma},n_{2,\sigma};\C)$, where $n_{i,\sigma}$ is the
multiplicity of an irreducible representation $\sigma$ of $H$ in
$\rho_i$. In particular,
\begin{equation}\label{dim.equality.intro}
\dim \C[Q_2 \backslash H /Q_1] = \sum_\sigma
n_{1,\sigma}n_{2,\sigma}.
\end{equation}

Let $S_1$, $S_2$ be closed sets of roots of a Chevalley group $G$
(see Section \ref{section-intertwining}) and let $P_{S_1}$,
$P_{S_2}$ be the corresponding parabolic subgroups of $G$. Let
$H=G(\lri,m)$ and $Q_i=P_{S_i}(\lri,m)$ ($i=1,2$), where the
latter are defined as the images in $G(\lri, m)$ of the
$\lri$-points of $P_{S_i}$. For each $m$, put $b^{S_1,S_2}_{\lri, m}=\dim
\C[P_{S_2}(\lri,m) \backslash G(\lri,m) /P_{S_1}(\lri,m)]$. We
define the zeta function of the Hecke modules of intertwiners of
type $(S_1,S_2)$ for $G(\lri)$ to be
$$\zeta^{S_1, S_2}_{G(\lri)}(s)=\sum_{m=0}^{\infty} b^{S_1,S_2}_{\lri, m} q^{-ms}.$$
\begin{thmABC}\label{theorem-hecke}
Let $G$ be a Chevalley group and let $P_{S_1}$, $P_{S_2}$ be
standard parabolics of $G$. There exist a constant $N$, formulae
$\psi_1(x),\dots,\psi_{k}(x)$ in the language of rings and
rational functions $R_1(X,Y)$,...,$R_{k}(X,Y)$ over $\Z$ such that
for all complete discrete valuation rings $\lri$ with finite
residue field $\resfield$ of characteristic $p$ and cardinality
$q$, where $p>N$, and for all $s\in \C$ with $\text{Re}(s)>
\sigma_0$ for some $\sigma_0\in \R$, the zeta function of Hecke
modules of type $(S_1,S_2)$ depends only on $q$ and can be written
as
$$\zeta^{S_1,S_2}_{G(\lri)}(s)=m_1(\resfield)R_1(q,q^{-s})+\dots+m_k(\resfield)R_k(q,q^{-s}),$$
where $m_i(\resfield)$ denotes the cardinality of the set of
$\resfield$-points of $\psi_i(x)$, for $1\leq i \leq k$. In
particular, if $\lri$ and $\lri'$ are compact discrete valuation
rings with residue fields of characteristic larger than $N$ and of
the same cardinality, then $G(\lri)$ and $G(\lri')$ have the same
zeta functions of Hecke modules. Consequently, the numbers
$b^{S_1,S_2}_{\lri, m}$ depend only on~$q$.
\end{thmABC}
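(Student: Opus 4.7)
The plan is to mimic the strategy used for Theorem~C: express $b^{S_1,S_2}_{\lri,m}$ as a $\maxideal$-adic integral of a Denef-Pas definable function, and then invoke Theorem~B (in the form of Corollary~\ref{cor-of-B} for integrands of the form $q^{-\Theta(x)s}$) to obtain the required uniform expression.

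First, I would translate the dimensions $b^{S_1,S_2}_{\lri,m}$ into orbit counts. By (\ref{dim.equality.intro}) and the identification of the Hecke module with the space of bi-invariant functions $\C[P_{S_2}(\lri,m)\backslash G(\lri,m)/P_{S_1}(\lri,m)]$, the integer $b^{S_1,S_2}_{\lri,m}$ equals the number of $(P_{S_2}(\lri,m),P_{S_1}(\lri,m))$-double cosets in $G(\lri,m)$. I would then apply the orbit-counting formula to the action of $P_{S_2}(\lri)\times P_{S_1}(\lri)$ on $G(\lri)/G^m(\lri)$ given by $(q,p)\cdot\bar g=q\bar g p^{-1}$. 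Using the normalized Haar measure $\mu_{G(K)}$ from Theorem~A, and noting that the preimage in $G(\lri)$ of a $(P_{S_2}(\lri,m),P_{S_1}(\lri,m))$-orbit through $\bar g$ is $P_{S_2}(\lri)\,g\,P_{S_1}(\lri)\,G^m(\lri)$, this yields the integral representation
$$b^{S_1,S_2}_{\lri,m} \;=\; \int_{G(\lri)}\frac{d\mu_{G(K)}(g)}{\mu_{G(K)}\bigl(P_{S_2}(\lri)\,g\,P_{S_1}(\lri)\,G^m(\lri)\bigr)}.$$

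Next, I would use Theorem~A to pull the $G(\lri)$-integration back to an integration over $\lri^{\dim G}$ against $|\alpha(x)|\,dx$, replacing $g$ by $\iota(x)$. The central technical claim is that
$$\Theta(x,m) \;:=\; -\log_q\mu_{G(K)}\bigl(P_{S_2}(\lri)\,\iota(x)\,P_{S_1}(\lri)\,G^m(\lri)\bigr)$$
is $\cL_{\vf}$-definable as a function of $x\in\lri^{\dim G}$ and $m$ in the value-group sort. The standard parabolics $P_{S_i}$ admit rational coordinate parametrizations of the same kind as in Theorem~A (they are products of root subgroups and a torus factor), so the double coset $P_{S_2}(\lri)\iota(x)P_{S_1}(\lri)$ is the image of an $\cL_{\vf}$-definable set under a Denef-Pas definable map whose Jacobian is computable in the language; the condition of lying in $G^m(\lri)$ is expressed by $\cL_{\vf}$-formulas in the valuation on the matrix entries of $g-\mathrm{id}$.

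Summing the geometric series formally then gives
$$\zeta^{S_1,S_2}_{G(\lri)}(s) \;=\; \int_{\lri^{\dim G}}\sum_{m=0}^{\infty} q^{\Theta(x,m)-ms}\,|\alpha(x)|\,dx,$$
and after interchanging sum and integral (valid for $\mathrm{Re}(s)$ large by a standard convergence estimate), the inner sum is a value-group parametric integral of the form covered by Corollary~\ref{cor-of-B}. Applying that corollary produces the required representation $\sum_i m_i(\resfield)R_i(q,q^{-s})$; the independence of $\lri$ for the full zeta function, and hence for each individual $b^{S_1,S_2}_{\lri,m}$, follows by comparing coefficients of $q^{-ms}$. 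I expect the main obstacle to be verifying the $\cL_{\vf}$-definability of $\Theta(x,m)$: this amounts to finding a Bruhat-type stratification of $G(\lri)$ adapted to the pair $(P_{S_2},P_{S_1})$ and compatible with the congruence filtration, so that the measure of the double coset $P_{S_2}(\lri)\,\iota(x)\,P_{S_1}(\lri)\,G^m(\lri)$ is controlled by finitely many definable conditions on $x$ and $m$.
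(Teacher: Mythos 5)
Your overall strategy (reduce to a $\maxideal$-adic integral of a definable function and invoke Corollary~\ref{cor-of-B}) is the right one, but the specific integral representation you choose creates a gap that you flag yourself and do not close, and which I do not believe can be closed as stated. You write $b^{S_1,S_2}_{\lri,m}=\int_{G(\lri)}\mu\bigl(P_{S_2}(\lri)\,g\,P_{S_1}(\lri)\,G^m(\lri)\bigr)^{-1}d\mu(g)$ and then need $\Theta(x,m)=-\log_q\mu\bigl(P_{S_2}(\lri)\iota(x)P_{S_1}(\lri)G^m(\lri)\bigr)$ to be an $\cL_{\vf}$-definable function into the value group. But the measure of a definable family of sets is in general only a \emph{constructible} function of the parameters (a sum of products of counting functions over the residue field and powers of $q$ with definable exponents), not $q$ raised to a single definable exponent. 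Concretely, the measure of the double coset equals $\mu(G^m(\lri))\cdot|P_{S_2}(\lri,m)|\,|P_{S_1}(\lri,m)|\,/\,|P_{S_2}(\lri,m)\cap \bar g P_{S_1}(\lri,m)\bar g^{-1}|$; the numerator is controlled by Lemma~\ref{lemma-index-dimension-formula} because parabolics are smooth, but the denominator counts $\lri/\maxideal^m$-points of the intersection of a parabolic with a conjugate of another parabolic, a group scheme whose structure varies with the Bruhat position of $g$ and which need not be smooth over $\lri$, so its point count need not be a pure power of $q$ times a constant. Establishing the stratification you allude to, compatible with all congruence levels $m$ simultaneously, is essentially the whole difficulty of the theorem, not a routine verification. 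A secondary issue: even granting definability of $\Theta(x,m)$, the expression $\sum_m q^{\Theta(x,m)-ms}$ involves a Presburger sum over $m$ in addition to the valued-field integral, which is not literally the shape treated by Corollary~\ref{cor-of-B}; some further reduction is needed.

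The paper circumvents all of this with a different counting identity (Lemma~\ref{lemma-be}): the number of double cosets $|Q_1\backslash H/Q_2|$ equals $|Q_1|^{-1}|Q_2|^{-1}\,|\{(x,y)\in H\times Q_2: xyx^{-1}\in Q_1\}|$. Applied to $H=G(\lri,m)$ and $Q_i=P_{S_i}(\lri,m)$, this replaces the measure of an orbit by the measure of the set of \emph{pairs} $(x,y)$ with $y\in P_{S_2}(\lri)G^m(\lri)$ and $xyx^{-1}\in P_{S_1}(\lri)G^m(\lri)$; the relevant depth function $w(x,y)=\min\{\lambda_{S_2}(y),\lambda_{S_1}(xyx^{-1})\}$ is definable by an elementary argument (Lemma~\ref{definability-of-lambda}), since $\lambda_S(x)$ is a minimum of valuations of polynomial expressions over the definable compact set $P_S(\lri)$. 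The normalizing factors $|P_{S_i}(\lri,m)|$ are pure powers of $q$ by smoothness, and the summation over $m$ is handled by the telescoping identity in Proposition~\ref{proposition-GLn} before Corollary~\ref{cor-of-B} is applied. You should replace your orbit-measure formula by this pair-counting formula; as written, your argument does not go through.
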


To prove Theorems~\ref{theorem-cc} and \ref{theorem-hecke}, we
express the zeta function of each  one of the above types in terms
of integrals over the group $G(\lri) \times G(\lri)$ with respect
to a normalized Haar measure, with integrands defined by certain
definable conditions. This is done by applying an appropriate
counting principle to the finite groups $G(\lri/\maxideal^m)$.
Next, we re-write each integral in terms of integrals over the
affine space $K^d$ with respect to the standard additive Haar
measure, using Theorem~\ref{theorem-coord}. Finally, we use the
fact that the latter are definable with respect to the language
$\cL_{\vf}$. We then apply Theorem B.

\subsection{Notation} $\lfi$ stands for a non-Archimedean local field, $\lri$ its discrete valuation ring, $\resfield$ the residue field,
$\maxideal$ the maximal ideal and $\pi$ a uniformizer.
The valuation on $K$ is denoted $v(\cdot)$. If $G$ is an algebraic
group then its $\lri$-points are denoted by $G(\lri)$, $G^m(\lri)$
is the $m^{\mathrm{th}}$ congruence kernel and $G(\lri,m)$ is the
$m^{\mathrm{th}}$ congruence quotient. For a locally compact
topological group $\mu_G$ stands for a Haar measure on $G$. We use $d_G$ as shorthand for $\dim G$.

\subsection{Organization of the paper}

In Section \ref{MT1} we describe the Denef-Pas language for
Henselian valued fields. Section \ref{MT2} concerns
cell-decomposition and quantifier elimination for local fields. In Sections \ref{MT3} and \ref{measure-transfer} we prove Theorems
 \ref{theorem-model-theory} and \ref{theorem-coord} respectively. In Sections~\ref{section-counting-cc} and \ref{section-intertwining}
we prove Theorems~\ref{theorem-cc} and \ref{theorem-hecke}, respectively, by applying Theorems \ref{theorem-coord}
and \ref{theorem-model-theory}. In the last section we make some concluding remarks.

\subsection{Acknowledgments}

We thank Nir Avni, Raf Cluckers, Anton Evseev, Julia Gordon, Assaf Hasson, Ehud Hrushovski, Tapani Hyttinen,
Jonathan Kirby, Jochen Koenigsmann, Francois Loeser,
Angus Macintyre, Jonathan Pila, Philip Scowcroft, Ilya Tyomkin, Amnon Yekutieli and Boris Zilber for various illuminating discussions during
the course of this work. We are also grateful to the anonymous referee for valuable comments and suggestions. The first author was supported by a postdoctoral fellowship funded by the Skirball Foundation via the
Center for Advanced Studies in Mathematics at Ben-Gurion University of the Negev (with thanks to Nadia Gurevich); his further research visits were supported
by the Academy of Finland and the University Research Committee of the University of Cape Town. The first, second, and fourth authors wish to thank Ben-Gurion University for support and hospitality.

\section{The Denef-Pas language for Henselian valued fields}\label{MT1}

\subsection{}

Let $K$ denote a valued field with valuation $v:K\rightarrow
\Gamma\cup\{\infty\}$, where $\Gamma$ is an ordered abelian group.
Let $\lri$ denote the valuation ring of $K$ and $\resfield$ the
residue field. Let $\mathrm{Res}:\lri\rightarrow \resfield$ denote
the residue map. Let $\maxideal$ denote the maximal ideal of
$\lri$. An angular component map on $K$ modulo $\maxideal$ is a
map $\ac:\lfi \rightarrow\resfield$ satisfying
$\overline{\mathrm{ac}}(0)=0$, $\ac(xy)=\ac(x)\ac(y)$ if $x,y\neq
0$, and $\ac(x)=\mathrm{Res}(x)$ if $v(x)=0$. A cross section on
$K$ is a homomorphism $\cs:\Gamma\rightarrow K^\times$ satisfying
$v\circ \cs=id$, extended to $\Gamma\cup \{\infty\}$ by
$\cs(\infty)=0$.

For any field $F$, the field of Laurent series $F((t))$ carries a
natural angular component map modulo $(t)$ defined by
$\ac(\sum_{i\geq l} a_it^i)=a_l$ where $a_i\in F$ and $a_l\neq 0$,
and a natural cross section defined by $\cs(n)=t^n$ (note that
$\Gamma=\Z$ in this example). If $K$ is a finite extension of
$\Q_p$ with uniformizing element $\pi$ (i.e.\ an element of least
positive value taken to be $1$), then $K$ has an angular component
map modulo $(\pi)$ defined by $\ac(x)=\mathrm{Res}(x\pi^{-v(x)})$ for $x\neq 0$,
and cross section defined by $\cs(n)=\pi^n$. Note that up to a
normalization of the valuation, we may assume in both cases that
the value group is isomorphic to $\Z$ and take $1$ as least
positive element. If a valued field has a cross section $\cs$, then
it also has an angular component map modulo $\maxideal$ defined by
$\ac(x)=\mathrm{Res}(x\cs(-v(x)))$ if $x\neq 0$. In general, by a theorem of
Cherlin \cite{Cherlin}, any $\aleph_1$-saturated valued field has a
cross section, hence an angular component map. Thus any valued
field has an elementary extension with a cross section. It is not
difficult to construct examples of valued fields which do not have
an angular component map modulo $\maxideal$, but these examples
will not concern us.

We shall be considering ultraproducts $\prod_{i\in I}K_i/\frak{U}$
of local fields $K_i$ with respect to an ultrafilter $\frak{U}$
on a countable index set $I$. Such fields have an explicit cross
section and angular component map. Indeed, any
ultraproduct of fields $K_i$ with cross sections~$\cs_i$ has a
cross section $\cs$ defined by $\cs((x_i)^*)=(\cs_i(x_i))^*$ where
$*$ denotes an equivalence class in the ultraproduct.

\subsection{}

The Denef-Pas language $\cL_{\vf}$ for valued fields is a family of
first-order three-sorted
languages of the form
$$(\cL_{\mathrm {Val}},\cL_{\mathrm {Res}},\cL_{\mathrm{Ord}},v,\mathrm{\overline{ac}}),$$ with

(i) the language of rings $\cL_{\mathrm{Val}}=\{+,-,\cdot~,0,1\}$ for the
valued field sort,

(ii) the language of rings $\cL_{\rm{Res}}=\{+,-,\cdot~,0,1\}$ for the residue field
sort,

(iii) the language $\cL_{\mathrm{Ord}}$, which is an extension of the language
$\{+,\infty,\le\}$ of ordered abelian groups with an element
$\infty$, which is interpreted as a top element, for the value
group sort,

(iv) a function symbol $v$ from the valued field sort to the value
group sort, which is interpreted as a valuation,

(v) a function symbol $\ac$ from the valued field sort to the
residue field sort, which is interpreted as an angular component
map modulo $\maxideal$.

Any fixed extension of the language $\{+,\infty,\leq\}$ gives an example of
the Denef-Pas language.
\subsection{}

We shall work in an example of $\cL_{\vf}$ which is the language
$$\cL_{\vfp}:=(\cL_{\mathrm {Val}},\cL_{\mathrm {Res}},\cL_{\mathrm{Pres}\infty},v,\mathrm{\overline{ac}})$$
where $\cL_{\mathrm{Pres}\infty}=\cL_{\mathrm{Pres}}\cup
\{\infty\}$ and $\cL_{\mathrm{Pres}}=\{+,0,1,\le\}\cup\{\equiv_n:
n\geq 1\}$ is the Presburger language for ordered abelian groups.
If we interpret $\equiv_n$ as congruence modulo $n$, then a
non-Archimedean local field or an ultraproduct of such fields will
be a structure for $\cL_{\vfp}$, where the value groups are
respectively $\Z$ and an ultrapower of $\Z$.

\subsection{}

The letters $x,y,u,\dots$ will denote tuples of valued field
variables, $\xi,\rho,\eta,\dots$ tuples of residue field
variables, and $k,l,\dots$ tuples of value group variables.
Formulae of $\cL_{\vf}$ are built up with the rules of many-sorted
language using the logical connectives $\wedge$ (and), $\vee$
(or), $\neg$ (negation), the quantifiers $\exists$ (exists) and
$\forall$ (for all), and $=$(equality). Formulae will be written
in the form $\psi(x,\xi,k)$, where $x,\xi,k$ are valued field,
residue field, and value group variables, respectively. For more
information on many-sorted logic see \cite{Enderton}, pp.\
277-281.

We shall be considering valued fields
$(K,\resfield,\Gamma\cup\{\infty\},v,\ac)$ as a structure for
$\cL_{\vf}$ where the function symbols $v$ and $\ac$ are
interpreted as valuation and angular component modulo $\maxideal$
respectively. By abuse of language we abbreviate this as $K$.
Given such a field $K$ and an $\cL_{\vf}$-formula
$\psi(x,\xi,k)$, where $x$ is an $m$-tuple, $\xi$ an
$n$-tuple, and $k$ an $s$-tuple, the definable set in $K$ defined
by $\psi(x,\xi,k)$ is the set of all $(a,d,c)\in K^m\times
\resfield^n\times \Gamma^s$ such that  $K\models \psi(a,d,c)$.

Throughout, any \lq definable\rq~ object stands for
$\cL_{\vf}$-definable unless specified otherwise.

\section{Quantifier elimination and cell decomposition}\label{MT2}
In this section we obtain uniform cell decomposition and
quantifier elimination for non-Archimedean local fields of large
residue characteristic using the Denef-Pas cell decomposition
theorem. Other closely related cell decomposition theorems have
been obtained in \cite{CL1,CL2,DL,Macintyre,Pas}.

Let $T$ be a first-order theory in a multi-sorted
language $\cL$. Let $\sort$ be a sort in $\cL$. Then $T$ is said to
have quantifier elimination in $\cL$ in the sort $\sort$ if for every $\cL$-formula
$\phi(x)$ there is an $\cL$-formula $\psi(x)$ which has no quantifiers of the sort
$\sort$
such that $T\vdash \forall x (\phi(x)\leftrightarrow \psi(x))$. If $M$ is an
$\cL$-structure, then $M$ is said to have quantifier elimination in $\cL$ in the sort $\sort$
if the $\cL$-theory of $M$ has this property.

Let $\cT_{\hac,0}$ denote the $\cL_{\vf}$-theory of Henselian
valued fields $K$ of residue characteristic zero such that there
is an angular component map $\ac:K\to \resfield$. Note that the
condition of being Henselian can be expressed by countably many sentences.

\begin{Def}[{\bf Cells}, cf. \protect{\cite[Definition 2.9]{Pas}}] Let $t$ and $x=(x_1,\dots,x_m)$ be valued field sort variables and $\xi=(\xi_1,\dots,\xi_n)$
residue field sort variables. Let $C$ be a definable subset of
$K^m\times \resfield^n$. Let $b_1(x,\xi), b_2(x,\xi),c(x,\xi)$ be $\cL_{\vf}$-definable functions 
from $C$ to $K$, $\lambda$ a positive integer,
and $\Diamond_1,\Diamond_2$ be $<,\leq$ or no condition. For each
$\xi\in \resfield^n$, let $A(\xi)$ denote the set of all $(x,t)$
from $K^m\times K$ such that  $(x,\xi)\in C,$ $\ac(t-c(x,\xi))=\xi_1$ and
$v(b_1(x,\xi))\Diamond_1 \lambda v(t-c(x,\xi)) \Diamond_2
v(b_2(x,\xi))$.

Suppose that the $A(\xi)$ are distinct; then $A=\bigcup_{\xi\in
\resfield^n} A(\xi)$ is called a cell in $K^m\times K$ with
parameters $\xi$ and centre $c(x,\xi)$, and $A(\xi)$ is called a
fiber of the cell. Suppose $C$ is defined by a formula $\psi(x,\xi)$ of $\cL_{\vf}$. 
Following \cite{Macintyre}, we call $\Theta=(\psi(x,\xi),b_1(x,\xi),b_2(x,\xi),c(x,\xi),\lambda)$ the datum of 
the cell $A$ and we say that $A$ is defined by $\Theta$.
\end{Def}

\begin{Def}[{\bf Uniform cell decomposition}, cf. \protect{\cite[Theorem 3.2]{Pas}}]\label{def-cd}
Let $t$ and $x=(x_1,\dots,x_m)$ be valued field sort variables and $\xi=(\xi_1,\dots,\xi_n)$
residue field sort variables. Let $\mathcal{F}$ be a class of valued fields. Let $f_1(x,t),\dots,f_r(x,t)$ be polynomials in $t$ whose
coefficients are definable functions in $x$ without parameters from $K^m$ to $K$ for all $K\in \mathcal{F}$, i.e.\ expressions of
the form $\sum_{k} a_k(x)t^k$ where the $a_k(x)$ are definable 
functions in $x$ without parameters from $K^m$ to $K$. For example, polynomials in $x,t$ with integer coefficients have this
form. We say that the fields in $\mathcal{F}$ have uniform cell decomposition of
dimension $m$ with respect to the polynomials $f_1(x,t), \ldots,
f_r(x,t)$ if, for some positive integer $N$, there exist
 
\begin{itemize}
\item  a non-negative integer $n$ and $\cL_{\vf}$-formulas $\psi_i(x,\xi)$, independent of $K\in \mathcal{F}$, defining sets without parameters in 
$K^m\times \resfield^n$;
\item $\cL_{\vf}$-formulas $b_{i,1}(x,\xi),b_{i,2}(x,\xi),c_i(x,\xi)$ and $h_{ij}(x,\xi)$, independent of ${K\in \mathcal{F}}$, defining functions without parameters from $K^m\times \resfield^n$ to $K$;
\item positive integers $\lambda_i$;
\item non-negative integers $w_{ij}$;
\item functions $\mu_{i}:\{1,\dots,r\}\to \{1,\dots,n\}$
\end{itemize}
where $i =1,\dots,N$ and $j=1,\dots,r$, such that

\begin{itemize}
\item for all $ i\in \{1,\dots,N\}$, $\Theta_i=(\psi_i(x,\xi),b_{i,1}(x,\xi),b_{i,2}(x,\xi),c_i(x,\xi),\lambda_i)$ is 
a cell datum, and for every field $K$ in $\mathcal{F}$ it defines a cell in $K^m\times K$;
\item for each field $K$ from $\mathcal{F}$, the set $K^m\times K$ is the union of the cells $A_i$ defined by $\Theta_i$ 
in $K$ for $i=1,\dots,N$;
\item for all $i\in \{1,\dots,N\}$, $j\in \{1,\dots,r\}$ and $(x,t)\in A_i(\xi)$, we have
\begin{displaymath}
\begin{array}{rcl}
v( f_j(x,t))&=&v\left(h_{ij}(x,\xi)(t-c(x,\xi))^{w_{ij}}\right);\\
\ac(f_j(x,t))&=&\xi_{\mu_i(j)}
\end{array}
\end{displaymath}
where the integers $w_{ij}$ and the maps $\mu_i$ are independent of $(x,\xi,t)$ and independent of the field $K$.
\end{itemize}
\end{Def}

In the following, by slight abuse of language, we shall sometimes denote a datum 
by $(C,b_1(x,\xi),b_2(x,\xi),c(x,\xi),\lambda)$
by which we shall mean that $C$ is the set of realizations in $K\in \mathcal{F}$ of a 
formula which is fixed for all fields in $\mathcal{F}$.

The following cell decomposition theorem was proved by Pas
\cite{Pas} based on the cell decomposition theorem of Denef for a
finite extension of $\Q_p$, see \cite{Denef}, which was in turn
based on a cell decomposition theorem of P.J.Cohen \cite{Cohen}.

\begin{theo}\label{Pas-cd}\cite[Theorem 3.2]{Pas}
Let $t$ and $x=(x_1,\dots,x_m)$ be valued field sort variables.
Let $f_1(x,t),\dots,f_r(x,t)$ be polynomials in $t$ with
coefficients definable functions in $x$. Then the class of all
models of $\cT_{\hac,0}$ has uniform cell decomposition of
dimension $m$ with respect to the polynomials $f_i(x,t)$ for
$i=1,\cdots,r$.
\end{theo}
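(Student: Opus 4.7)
The plan is to combine Hensel's lemma, available in any Henselian valued field of residue characteristic zero, with a double induction: outer induction on the number $r$ of polynomials, inner induction on the maximal degree $d$ of $t$ appearing in the $f_j$. Given a cell decomposition for a single polynomial, taking common refinements yields one for several polynomials (possibly enlarging the residue-field parameter tuple and readjusting the maps $\mu_i$), so the heart of the argument is the one-polynomial case. Throughout I would construct the defining formulas $\psi_i$, $b_{i,\cdot}$, $c_i$, $h_{ij}$ explicitly from the coefficients of the $f_j$, which are themselves definable in $x$, ensuring they depend only on the input polynomials and not on the particular $K \in \mathcal{F}$.

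For the base case $d=1$, a linear polynomial $f(x,t)=a_1(x)t+a_0(x)$ splits the space into the definable region $\{a_1(x)=0\}$, on which $v(f)$ and $\ac(f)$ are controlled by $a_0$, and its complement, where the natural centre is $c(x)=-a_0(x)/a_1(x)$, giving $v(f)=v(a_1)+v(t-c)$ and $\ac(f)=\ac(a_1)\ac(t-c)$. One records $\ac(t-c)$ as a new residue-field parameter $\xi_1$ and obtains a cell datum satisfying the conclusion, with $w=1$ and $\mu(1)=1$.

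For the inductive step I would use the Henselian property to locate the roots of $f(x,t)$ in $\lri$. On the definable subset of $(x,t)$ where, for some definable $c(x,\xi)$, the valuation $v(t-c)$ is sufficiently large relative to the valuations of the coefficients in the Taylor expansion of $f$ at $c$, Hensel's lemma produces a factorization $f(x,t)=(t-c(x,\xi))^{w}\cdot g(x,t,\xi)$ with $g$ of unit value on the cell; one reads off $v(f)$ and $\ac(f)$ from this factorization, and the cofactor $g$ has degree in $t$ strictly less than $d$, so the inductive hypothesis applies. On the complementary region, $v(f(x,t))$ is dominated by the valuation of one monomial $a_k(x)t^k$ (a Newton-polygon condition, expressible by a Boolean combination of inequalities between the $v(a_k(x))$), which again reduces matters to a polynomial of lower effective degree.

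The main obstacle is ensuring that all of the subdivisions above are driven only by definable data in $x$ together with finitely many residue-field parameters $\xi$, and that the total number $N$ of cells is bounded uniformly over $K\in\mathcal{F}$. The cleanest route is to prove, in parallel with cell decomposition, quantifier elimination for $\cT_{\hac,0}$ in the valued-field sort relative to the residue field and value group sorts; model-theoretic compactness then extracts a single bound $N$ and a single finite list of formulas valid across all models. This is also where the residue-characteristic-zero hypothesis enters essentially, as it permits the simple-root form of Hensel's lemma (Newton's method) to be applied without wild-ramification obstructions, letting the induction on degree terminate.
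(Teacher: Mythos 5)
First, a point of comparison: the paper does not prove Theorem \ref{Pas-cd} at all; it is quoted verbatim from Pas \cite[Theorem 3.2]{Pas}, with the text only recording its lineage (Pas, building on Denef and Cohen). So your proposal can only be measured against Pas's argument, and in outline it does follow that Cohen--Denef--Pas strategy: induction on the $t$-degree, Hensel's lemma to produce definable centres near roots, and a dominant-monomial (Newton polygon) analysis away from them. Two places where your sketch papers over real work. First, the reduction ``from one polynomial to several by taking common refinements'' is not free: two cells with different centres $c_1(x,\xi)$ and $c_2(x,\xi)$ do not intersect in a cell, and showing that such an intersection is a finite union of cells is itself a lemma that requires running the cell machinery on the linear polynomials $t-c_i(x,\xi)$. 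Pas organizes the induction differently precisely to avoid iterated refinement: the induction hypothesis is applied to the $t$-derivatives $\partial f_j/\partial t$ of all the polynomials simultaneously, and a preparation lemma then upgrades control of $v(f_j')$ and $\ac(f_j')$ on a cell to control of $v(f_j)$ and $\ac(f_j)$; your alternative of factoring $f=(t-c)^{w}g$ and recursing on $g$ is closer to Cohen's original argument and can be made to work, but the claim that $g$ is a unit on the cell is exactly the content of the Hensel step and must be proved, not assumed. Second, the role of residue characteristic zero is not wild ramification: it is that the integers $1,\dots,d$ are units in $\lri$, so the Taylor coefficients $\tfrac{1}{i!}\,\partial^i f/\partial t^i$ and the Hensel criterion involving $f'$ behave uniformly; this is also why the paper must later invoke compactness (Lemma \ref{compactness}, Theorem \ref{cd}) to descend from $\cT_{\hac,0}$ to local fields of large residue characteristic. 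Relatedly, the uniform bound $N$ on the number of cells across all models of $\cT_{\hac,0}$ needs no separate compactness argument at this stage: the cell data are finitely many fixed formulas produced by the syntactic construction, so uniformity over the elementary class is automatic.
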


The following quantifier elimination theorem follows, with (2) an easy consequence of (1).

\begin{theo} \label{Pas-qe}\cite[Theorem 4.1]{Pas}
\begin{enumerate}
 \item The theory $\cT_{\hac,0}$ admits elimination of quantifiers in the
valued field sort.
\item Every $\cL_{\vf}$-formula $\psi(x,\xi,k)$ without parameters, where $x$
is a tuple of variables in the valued field sort, $\xi$ a tuple
of variables in the residue field sort, and $k$ a tuple of
variables in the value group sort, is $\cT_{\hac,0}$-equivalent to
a finite disjunction of formulae of the form
$$\chi(\ac(g_1(x)),\dots,\ac(g_s(x)),\xi)\wedge \theta(v(g_1(x)),\dots,v(g_s(x)),k),$$
where $\chi(x,\xi)$ is an $\cL_{\mathrm{Res}}$-formula,
$\theta(x,k)$ an $\cL_{\mathrm{Ord}}$-formula, and\\
$g_1(x),\dots,g_r(x)\in~\Z[x]$. 
\end{enumerate}

\end{theo}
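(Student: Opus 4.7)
My plan is to derive both parts of Theorem~\ref{Pas-qe} from the uniform cell decomposition Theorem~\ref{Pas-cd}, with part (2) following from part (1) by a standard disjunctive normal form argument; the bulk of the work lies in (1).

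For (1), by a standard reduction in many-sorted model theory it suffices to show that any formula $\exists t\,\phi(x,t,\xi,k)$, with $\phi$ quantifier-free in the valued field sort, is $\cT_{\hac,0}$-equivalent to a formula with no valued-field quantifier. After disjunctive normal form I may assume $\phi$ is a conjunction of atomic and negated-atomic $\cL_{\vf}$-formulas. Each valued-field atomic subformula involving $t$ can be expressed using $v(f_j(x,t))$ and $\ac(f_j(x,t))$ for finitely many polynomials $f_1,\dots,f_r$ in $t$ with coefficients definable in $x$; for instance $f(x,t)=0$ becomes $v(f(x,t))=\infty$, and negations are treated analogously.

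I then invoke Theorem~\ref{Pas-cd} applied to $f_1,\dots,f_r$: there are cells $A_1,\dots,A_N$ with data $\Theta_i=(\psi_i(x,\xi),b_{i,1},b_{i,2},c_i,\lambda_i)$ partitioning $K^m\times K$, such that on cell $A_i$,
$$v(f_j(x,t))=v(h_{ij}(x,\xi))+w_{ij}v(t-c_i(x,\xi)),\qquad \ac(f_j(x,t))=\xi_{\mu_i(j)}.$$
Substituting, $\phi$ restricted to $A_i$ takes the form $\alpha_i(x,\xi,k)\wedge \gamma_i(x,\xi,k,v(t-c_i))\wedge \delta_i(x,\xi,t)$, where $\alpha_i$ is independent of $t$, $\gamma_i$ depends on $t$ only through $v(t-c_i)$, and $\delta_i$ encodes the cell-membership conditions $\ac(t-c_i)=\xi_1$ and $v(b_{i,1})\Diamond_1 \lambda_i v(t-c_i)\Diamond_2 v(b_{i,2})$.

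The key step, and the main obstacle, is to show that $\exists t\,(\delta_i\wedge \gamma_i)$ reduces to a condition in the value-group and residue-field sorts alone. This amounts to producing $t\in K$ with prescribed $\ac(t-c_i)$ and $v(t-c_i)$ lying in a definable subset $I$ of the value group (the intersection of the cell bounds with the restrictions imposed by $\gamma_i$). Since the value group is the image of $v$, for every $\gamma\in I$ there is $a\in K$ with $v(a)=\gamma$; if $\xi_1\neq 0$, lifting $\xi_1$ to a unit in $\lri$ and multiplying by $a/\ac(a)$ yields $a'$ with $v(a')=\gamma$ and $\ac(a')=\xi_1$, so that $t=c_i+a'$ works. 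The degenerate case $\xi_1=0$ forces $t=c_i$ and is handled separately. Hence $\exists t$ collapses to ``$\xi_1\neq 0$ and $I\neq\varnothing$'' (or the edge case), both expressible purely in the residue field and ordered abelian group sorts. Taking the disjunction over the finitely many cells eliminates the quantifier $\exists t$.

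For (2), apply (1) to $\psi(x,\xi,k)$ to obtain an equivalent formula with no valued-field quantifiers. Using $f(x)=0\Leftrightarrow v(f(x))=\infty$ for $f\in\Z[x]$, the result is a Boolean combination of residue-field atomic formulas built from $\ac(g_i(x))$ and $\xi$, and value-group atomic formulas built from $v(g_i(x))$ and $k$, with $g_i\in\Z[x]$. In disjunctive normal form, each disjunct separates cleanly into a $\chi$-part involving only $\ac$-terms and $\xi$, and a $\theta$-part involving only $v$-terms and $k$, yielding the claimed normal form.
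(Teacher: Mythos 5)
The paper does not prove this theorem; it cites Pas directly and merely observes that part (2) is an easy consequence of part (1). Your reconstruction follows the correct high-level strategy of Pas's original argument: eliminate a single existential valued-field quantifier from a quantifier-free $\phi$ via cell decomposition, using the existence of a witness $t$ with prescribed $v(t-c_i)$ and $\ac(t-c_i)\ne 0$, then iterate; and your derivation of part (2) from part (1) (via the fact that valued-field terms in $\cL_{\vf}$ are polynomials over $\Z$, together with disjunctive normal form) is sound.

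However, there is a genuine gap in the way you invoke Theorem~\ref{Pas-cd}. In Definition~\ref{def-cd} the cell data functions $c_i(x,\xi)$, $b_{i,1}(x,\xi)$, $b_{i,2}(x,\xi)$, $h_{ij}(x,\xi)$ are only required to be $\cL_{\vf}$-definable, and a priori such defining formulas may themselves contain valued-field quantifiers. After you eliminate $\exists t$, the residual condition is a formula in $v(\cdot)$ and $\ac(\cdot)$ of these cell-data functions, so it is not obviously quantifier-free in the valued field sort, and the iteration you propose does not visibly terminate. In Pas's proof this circularity is broken by a simultaneous induction on the number of valued-field variables: the one-variable cell decomposition produces cell data of a controlled algebraic form built from the coefficients of the $f_j$, and quantifier elimination in fewer variables is used to keep the cell data quantifier-free at each stage. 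To make your argument complete you would need either to invoke that stronger, prepared form of the cell decomposition, or to set up the joint induction on the number of valued-field variables explicitly and verify that the cell data produced at each step is itself free of valued-field quantifiers.
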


Now we consider how to transfer statements from models of
$\cT_{\hac,0}$ to non-Archimedean local fields.

\begin{lemma}\label{compactness} Let $\phi$ be a sentence of $\cL_{\vf}$. Assume that $\phi$ holds in all models of
$\cT_{\hac,0}$. Then there exists some constant $c=c(\phi)$ such
that $\phi$ holds in all non-Archimedean local fields $K$ with
residue characteristic at least $c$.\end{lemma}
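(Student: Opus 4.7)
The plan is to prove this by a standard compactness/ultraproduct argument in the spirit of Ax--Kochen. Suppose for contradiction that no such constant $c=c(\phi)$ exists. Then for every $n\in \N$ there is a non-Archimedean local field $K_n$ whose residue characteristic is at least $n$ but in which $\phi$ fails. I would fix a non-principal ultrafilter $\mathfrak{U}$ on $\N$ and form the ultraproduct
$$K^* := \prod_{n\in \N} K_n \big/ \mathfrak{U},$$
viewed as an $\cL_{\vf}$-structure via the componentwise interpretation of the valuation and of the angular component map (the latter is well-defined on $K^*$ by the explicit construction described in Section~\ref{MT1}, since each local field carries a canonical $\ac$ and cross section).

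Next I would verify that $K^*$ is a model of $\cT_{\hac,0}$. There are three things to check. First, $K^*$ has residue characteristic zero: for each prime $p$, the set of indices $n$ with $\mathrm{char}(\resfield_{K_n}) = p$ is finite (it is contained in $\{n : n \le p\}$), hence not in $\mathfrak{U}$, and so by \L{}o\'s's theorem the sentence $p\cdot 1\neq 0$ in the residue field sort holds in $K^*$ for every prime $p$. Second, $K^*$ is Henselian: as noted in the excerpt, Henselianity is expressible by countably many $\cL_{\vf}$-sentences, each of which holds in every $K_n$, and hence in $K^*$ by \L{}o\'s. Third, the interpretation of the function symbol $\ac$ on $K^*$ is an angular component map modulo the maximal ideal; the three defining conditions ($\ac(0)=0$, multiplicativity on nonzero elements, and agreement with $\mathrm{Res}$ on units) are first-order $\cL_{\vf}$-sentences that hold in every $K_n$, so they transfer to $K^*$ by \L{}o\'s.

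Since $K^*\models \cT_{\hac,0}$, the hypothesis of the lemma gives $K^*\models \phi$. On the other hand, by assumption $K_n\models \neg\phi$ for every $n$, so by \L{}o\'s's theorem $K^*\models \neg\phi$, a contradiction. Therefore some constant $c=c(\phi)$ with the required property must exist.

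The only genuinely delicate point is verifying that $K^*$ is a legitimate model of $\cT_{\hac,0}$, in particular that the ultraproduct really does have residue characteristic $0$ and that the Henselian property (axiomatised by infinitely many sentences, one for each degree) survives the ultraproduct; both are handled by the observation above together with \L{}o\'s's theorem. Everything else is routine compactness.
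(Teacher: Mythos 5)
Your proof is correct and is essentially identical to the paper's: the paper also argues by contradiction, forms the ultraproduct of counterexample fields over a non-principal ultrafilter, and notes that the result has residue characteristic zero and is an $\cL_{\vf}$-structure, hence a model of $\cT_{\hac,0}$, contradicting \L{}o\'s. Your version merely spells out the verification of the axioms in more detail than the paper does.
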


\begin{proof} Suppose the contrary; then there are infinitely many non-Archimedean local fields $K_1,\dots,K_i,\dots$
of increasing residue characteristic such that for all $n\geq 1$,
$K_n\models \neg \phi$. Let $\frak{U}$ be a non-principal
ultrafilter on the set of natural numbers $\N$, and $L$ the
ultraproduct $\prod_{n\in \N} K_n/\frak{U}$. Then $L\models \neg
\phi$. But $L$ has residue characteristic zero and is an
$\cL_{\vf}$-structure, hence is a model of $\cT_{\hac,0}$, which
is a contradiction.
\end{proof}

\begin{theo}\label{cd} Let $f_1(x,t),\dots,f_r(x,t)$ be as in Theorem \ref{Pas-cd}.
Then there is some constant $c=c(m; f_1(x,t),\dots,f_r(x,t))$
such that the class of all non-Archimedean local fields of residue
characteristic at least $c$ has uniform cell decomposition of
dimension $m$ with respect to the polynomials $f_i(x,t)$ for
$i=1,\dots,r$.
\end{theo}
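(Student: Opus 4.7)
The plan is to apply Theorem~\ref{Pas-cd} to obtain a cell decomposition whose data are expressed by $\cL_{\vf}$-formulae and integer parameters that do not depend on the field, then package the full statement of Definition~\ref{def-cd} as a single $\cL_{\vf}$-sentence, and finally transfer it from models of $\cT_{\hac,0}$ to non-Archimedean local fields of large residue characteristic via Lemma~\ref{compactness}.

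First, apply Theorem~\ref{Pas-cd} to $m$ and $f_1(x,t),\dots,f_r(x,t)$. This yields a positive integer $N$, a non-negative integer $n$, $\cL_{\vf}$-formulae $\psi_i(x,\xi)$, $b_{i,1}(x,\xi)$, $b_{i,2}(x,\xi)$, $c_i(x,\xi)$, $h_{ij}(x,\xi)$ (independent of the field), positive integers $\lambda_i$, non-negative integers $w_{ij}$, and functions $\mu_i\colon\{1,\dots,r\}\to\{1,\dots,n\}$, such that in every model of $\cT_{\hac,0}$ the tuples $\Theta_i=(\psi_i,b_{i,1},b_{i,2},c_i,\lambda_i)$ satisfy all three bullet points of Definition~\ref{def-cd}.

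Second, encode all of Definition~\ref{def-cd} into a single $\cL_{\vf}$-sentence $\phi$. This is possible because every ingredient of Definition~\ref{def-cd} is expressible without extra parameters:
the assertion that each $\Theta_i$ is a cell datum (including the distinctness of fibres $A_i(\xi)$ for different~$\xi$) is a conjunction of $\cL_{\vf}$-formulae quantifying over the valued field sort and the residue field sort; the covering property
$$\forall x\forall t\; \bigvee_{i=1}^{N}\exists\xi\; \bigl[(x,\xi)\in C_i\wedge \ac(t-c_i(x,\xi))=\xi_1 \wedge v(b_{i,1})\,\Diamond_{i,1}\,\lambda_i v(t-c_i)\,\Diamond_{i,2}\,v(b_{i,2})\bigr]$$
is a first-order sentence since the integers $\lambda_i$ and the ordering symbols are fixed; and the equalities $v(f_j(x,t))=v(h_{ij}(x,\xi)(t-c_i(x,\xi))^{w_{ij}})$ and $\ac(f_j(x,t))=\xi_{\mu_i(j)}$ are $\cL_{\vf}$-formulae because $w_{ij}$ and $\mu_i(j)$ are fixed integers. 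Taking the conjunction of these finitely many sentences over $i\in\{1,\dots,N\}$ and $j\in\{1,\dots,r\}$ yields the desired $\phi$.

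Third, by construction and Theorem~\ref{Pas-cd}, $\phi$ holds in every model of $\cT_{\hac,0}$. Lemma~\ref{compactness} then produces a constant $c=c(\phi)$ such that $\phi$ holds in every non-Archimedean local field of residue characteristic at least~$c$. Since $\phi$ is determined by $m$ and by the polynomials $f_1(x,t),\dots,f_r(x,t)$, the constant $c$ depends only on these data, which is exactly the statement of Theorem~\ref{cd}.

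The only real obstacle is the bookkeeping of step two: one must verify that \emph{every} clause in Definition~\ref{def-cd}, in particular the distinctness of the fibres and the covering property, is uniformly expressible as a single $\cL_{\vf}$-sentence in the language of the three sorts. Once one observes that the parameters $N$, $n$, $\lambda_i$, $w_{ij}$ and the finite functions $\mu_i$ are fixed (absolute) integers, the encoding is routine and the compactness transfer from Lemma~\ref{compactness} finishes the proof.
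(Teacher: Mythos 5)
Your proof is correct and follows essentially the same route as the paper: both express the content of Definition~\ref{def-cd} (covering by the cells and the compatibility of $v(f_j)$ and $\ac(f_j)$ with the cell data) as a first-order $\cL_{\vf}$-sentence, note that it holds in all models of $\cT_{\hac,0}$ by Theorem~\ref{Pas-cd}, and transfer it to local fields of large residue characteristic via Lemma~\ref{compactness}. Your extra care in also encoding the distinctness of fibres and the $\ac(t-c_i(x,\xi))=\xi_1$ clause in the covering sentence is sound bookkeeping that the paper leaves implicit.
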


\begin{proof} By Theorem \ref{Pas-cd}, the class of all models of $\cT_{\hac,0}$ has uniform
cell decomposition with respect to the polynomials
$f_1,\dots,f_r$. For $i\in \{1,\dots,N\}$, denote the cell data by
$\Theta_i(C_i,b_{i,1}(x,\xi),b_{i,2}(x,\xi),c_i(x,\xi),\lambda_i)$.
Let $h_{ij}(x,\xi)$, $v_{ij}$, and $\mu_i$ be as in
Definition~\ref{def-cd}. Then every model of $\cT_{\hac,0}$
satisfies the conjunction $\psi_1\wedge \psi_2$, where $\psi_1$ is
the formula
$$\forall x_1\dots \forall x_m\forall t \exists \xi \left(\bigvee_{1\leq i\leq N} (x,\xi)\in C_i \wedge
v(b_{i,1}(x,\xi))\Diamond_1 \lambda_i v(t-c(x,\xi)) \Diamond_2
v(b_{i,2}(x,\xi))\right)$$ and $\psi_2$ is the formula
$$\left.\bigwedge_{1\leq i\leq N} \right( \forall x_1 \dots \forall x_m \forall t \forall \xi [(x,\xi)\in C_i \wedge
v(b_{i,1}(x,\xi))\Diamond_1 \lambda_i v(t-c(x,\xi)) \Diamond_2
v(b_{i,2}(x,\xi))]$$
$$\left.\longrightarrow \bigwedge_{1\leq j\leq r} \left[v(f_j(x,t))=v(h_{ij}(x,\xi)(t-c(x,\xi))^{v_{ij}}) \wedge
\ac(f_j(x,t))=\xi_{\mu_i(j)}\right]\right).$$

By Lemma \ref{compactness} there is some $c$ such that the
sentence $\psi_1\wedge \psi_2$ holds in all non-Archimedean local
fields $K$ of residue characteristic at least $c$. For all such
$K$, the space $K^m$ has cell decomposition as specified by $\psi_1\wedge
\psi_2$, hence the cell decomposition is uniform.
\end{proof}

\begin{theo}\label{qe} Let $\psi(x,\xi,k)$ be an $\cL_{\vf}$-formula. Then there is a constant $c$ and
an $\cL_{\vf}$-formula $\phi(x,\xi,k)$ without quantifiers of the
valued field sort such that $\phi(x,\xi,k)$ and $\psi(x,\xi,k)$
are equivalent in all non-Archimedean local fields of residue
characteristic at least~$c$. Furthermore, $\phi(x,\xi,k)$ can be
expressed as a finite disjunction of formulae of the form
$$\chi(\ac(g_1(x)),\dots,\ac(g_s(x)),\xi)\wedge \theta(v(g_1(x)),\dots,v(g_s(x)),k),$$
where $\chi(x,\xi)$ is an $\cL_{\mathrm{Res}}$-formula,
$\theta(x,k)$ an $\cL_{\mathrm{Ord}}$-formula, and
$g_1(x),\dots,g_r(x)\in \Z[x]$.
\end{theo}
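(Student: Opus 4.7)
The plan is to reduce Theorem~\ref{qe} to its characteristic-zero counterpart, Theorem~\ref{Pas-qe}, by exactly the same compactness argument used to derive Theorem~\ref{cd} from Theorem~\ref{Pas-cd}. The key point is that Theorem~\ref{Pas-qe} already produces, for any $\cL_{\vf}$-formula $\psi(x,\xi,k)$, a quantifier-free (in the valued-field sort) formula $\phi(x,\xi,k)$ of the prescribed disjunctive shape such that $\cT_{\hac,0}\vdash \forall x\,\forall \xi\,\forall k\,(\psi(x,\xi,k)\leftrightarrow \phi(x,\xi,k))$. The only thing that Theorem~\ref{qe} adds over Theorem~\ref{Pas-qe} is a transfer of this equivalence from residue characteristic zero to non-Archimedean local fields of large residue characteristic.

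First, I would apply Theorem~\ref{Pas-qe}(2) to the given formula $\psi(x,\xi,k)$ to obtain a formula $\phi(x,\xi,k)$, expressible as a finite disjunction of conjunctions of the form
\[
\chi(\ac(g_1(x)),\dots,\ac(g_s(x)),\xi)\wedge \theta(v(g_1(x)),\dots,v(g_s(x)),k),
\]
with $\chi$ an $\cL_{\mathrm{Res}}$-formula, $\theta$ an $\cL_{\mathrm{Ord}}$-formula, and $g_i\in \Z[x]$, such that the equivalence $\psi\leftrightarrow \phi$ holds in every model of $\cT_{\hac,0}$. Next, I would form the $\cL_{\vf}$-sentence
\[
\Xi \;:=\; \forall x\,\forall \xi\,\forall k\,\bigl(\psi(x,\xi,k)\leftrightarrow \phi(x,\xi,k)\bigr),
\]
which holds in every model of $\cT_{\hac,0}$ by the previous step.

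Now I would invoke Lemma~\ref{compactness} applied to $\Xi$. This yields a constant $c=c(\Xi)=c(\psi)$ such that $\Xi$ holds in every non-Archimedean local field $K$ of residue characteristic at least $c$. By the definition of satisfaction, this means that in any such $K$ and for every tuple $(a,d,b)\in K^m\times \resfield^n\times \Z^r$ of appropriate arities, we have $K\models \psi(a,d,b)\iff K\models \phi(a,d,b)$. Thus $\phi(x,\xi,k)$ is an $\cL_{\vf}$-formula without quantifiers in the valued-field sort which is equivalent to $\psi(x,\xi,k)$ in all non-Archimedean local fields of residue characteristic at least $c$, and it has the required disjunctive form.

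There is essentially no obstacle in this argument: the work has been done by Pas in the characteristic-zero setting, and the transfer to local fields of large residue characteristic is automatic from the ultraproduct argument in Lemma~\ref{compactness}, since an ultraproduct of non-Archimedean local fields of unboundedly large residue characteristic is a Henselian valued field of residue characteristic zero equipped with a natural angular component map (as discussed in Section~\ref{MT1}), hence a model of $\cT_{\hac,0}$. The only mild point to check is that the sentence $\Xi$ really is a single sentence of $\cL_{\vf}$, which is clear because all free variables of $\psi$ and $\phi$ (in all three sorts) are universally quantified by the appropriate sort-specific quantifiers. The resulting constant $c$ depends only on $\psi$, as claimed.
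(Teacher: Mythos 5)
Your proof is correct and is exactly the argument the paper intends: the paper's own proof is the one-line "Follows from Theorem \ref{Pas-qe} and Lemma \ref{compactness}," and you have simply spelled out the details (form the universal closure of the equivalence supplied by Theorem \ref{Pas-qe}(2) as a single $\cL_{\vf}$-sentence, then transfer it via the ultraproduct/compactness argument of Lemma \ref{compactness}). No further comment is needed.
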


\begin{proof} Follows from Theorem \ref{Pas-qe} and Lemma \ref{compactness}.
\end{proof}

\section{Integration and definable sets}\label{MT3}

In this section we prove Theorem \ref{theorem-model-theory} using the methods of Pas
\cite{Pas}.

\subsection{Integrals as sums over the residue field and value group}
 The following
theorem shows that the measure of a subset of $K^{m+1}$ defined by
a formula in the three-sorted language $\cL_{\vf}$ can be
expressed in terms of measures of subsets which are defined by a
quantifier-free formula in the value field sort and involve one
less valued field variable. This comes at the cost of introducing
extra residue field and value group variables. The theorem
generalizes Lemma 5.2 in \cite{Pas}. With Theorems~\ref{cd} and
\ref{qe} at hand, Pas' proof easily adapts to our case.

\begin{theo}\label{lem-int} Let $y,\rho,k$ be (tuples of) valued field, residue field, and value group variables respectively. Let
$x=(x_1,\dots,x_m)$ and $t$ be valued field variables and
$\psi(x,t,y,\rho,k)$ an $\cL_{\vf}$-formula without quantifiers of
valued field sort. Consider the parametrized integral
$$\int_{W_K(y,\rho,k)} dxdt$$
where $W_K(y,\rho,k)=\{(x,t)\in K^{m+1}:
K\models \psi(x,t,y,\rho,k)\}$, and $dx$ (respectively $dt$) is the
normalized Haar measure on $K^m$ (respectively $K$) such that $\lri^m$
(respectively $\lri$) has measure $1$. Then there are
$\cL_{\vf}$-formulae $\phi_1(x, \xi^{(1)}, l, y, \rho, k),\ldots,
\phi_N(x, \xi^{(N)}, l, y, \rho, k)$ without quantifiers of valued
field sort, where $\xi^{(i)}$ and $l$ are residue field sort and
value group sort variables, of arities $n(i)$ and $1$,
respectively, and a constant $c$ such that, for all
non-Archimedean local fields $K$ of residue characteristic at
least $c$,

$$\int_{W_K(y,\rho,k)} dxdt=q^{-1}\sum_{1\leq i\leq N} \sum_{\xi^{(i)}\in \resfield^{n(i)}}\sum_{l\in \Z} q^{-l}
\int_{E_K^{(i)}(\xi^{(i)},l,y,\rho,k)} dx$$ where
$$E_K^{(i)}(\xi^{(i)},l,y,\rho,k)=\{x\in K^m: K\models
\phi_i(x,\xi^{(i)},l,y,\rho,k)\}.$$
\end{theo}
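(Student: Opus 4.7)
The proof follows the strategy of Pas \cite[Lemma 5.2]{Pas} almost verbatim, using our uniform cell decomposition (Theorem~\ref{cd}) and uniform quantifier elimination (Theorem~\ref{qe}) in place of their residue-characteristic-zero analogues. The plan is to treat $t$ as the distinguished variable, apply cell decomposition to the polynomials in $t$ appearing in $\psi$, compute the $t$-integral on each cell explicitly, and then repackage the result as a sum over $\xi$ and $l$.

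\textbf{Step 1: extract polynomials and apply cell decomposition.} Since $\psi(x,t,y,\rho,k)$ is quantifier-free in the valued field sort, it is a Boolean combination of atomic formulae of the form $\chi(\ac(f_1),\dots,\ac(f_r))$ and $\theta(v(f_1),\dots,v(f_r),k)$, where $f_1(x,t),\dots,f_r(x,t)$ are polynomials in $t$ whose coefficients are definable functions of $x$ (parametrized also by $y,\rho$, which we carry as side parameters). Apply Theorem~\ref{cd} to these polynomials: there is a constant $c$ and finitely many cell data $\Theta_i=(\psi_i(x,\xi),b_{i,1},b_{i,2},c_i,\lambda_i)$, together with $h_{ij}$, $w_{ij}$, $\mu_i$, such that for every non-Archimedean local field $K$ of residue characteristic at least $c$ the cells $A_i$ partition $K^m\times K$, and on each fiber $A_i(\xi)$ one has
\[
 v(f_j(x,t))=v(h_{ij}(x,\xi))+w_{ij}\,v(t-c_i(x,\xi)),\qquad \ac(f_j(x,t))=\xi_{\mu_i(j)}.
\]
Substituting these identities into $\psi$ converts $\psi$ on $A_i(\xi)$ into a formula $\widetilde{\psi}_i(x,\xi,l,y,\rho,k)$, quantifier-free in the valued field sort, in which $t$ appears only through $l:=v(t-c_i(x,\xi))$ and $\xi_1=\ac(t-c_i(x,\xi))$.

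\textbf{Step 2: compute the $t$-integral cell by cell.} For fixed $(x,\xi,y,\rho,k)$ with $(x,\xi)\in C_i$ and fixed $l\in\Z$, the set of $t\in K$ with $v(t-c_i(x,\xi))=l$ and $\ac(t-c_i(x,\xi))=\xi_1$ is a translate of $\pi^l(\xi_1+\maxideal)$, hence has $dt$-measure $q^{-l-1}$. Combining this with the cell condition $v(b_{i,1})\Diamond_1\lambda_i l\Diamond_2 v(b_{i,2})$, Fubini gives
\[
 \int_{W_K(y,\rho,k)\cap A_i}\!\!dx\,dt \;=\; q^{-1}\!\!\sum_{\xi\in\resfield^{n(i)}}\sum_{l\in\Z} q^{-l}\!\int_{E_K^{(i)}(\xi,l,y,\rho,k)}\!\!dx,
\]
where $E_K^{(i)}$ is defined by the formula $\phi_i$ obtained from $\widetilde{\psi}_i$ by conjoining the cell datum inequalities and the condition $(x,\xi)\in C_i$ (with $l$ now treated as a free value-group variable). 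Each $\phi_i$ is quantifier-free in the valued field sort by construction, and summing over $i$ yields the claimed formula.

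\textbf{Main obstacle and uniformity.} The only genuinely delicate point is \emph{uniformity} in $K$: we must ensure that the formulae $\phi_i$ and the constants $n(i),N,c$ depend only on $\psi$ (and on $m$), not on $K$. This is exactly what Theorem~\ref{cd} delivers, because the cell data $\Theta_i$, the auxiliary data $h_{ij},w_{ij},\mu_i$, and the defining formulae $\psi_i$ are produced uniformly for all non-Archimedean local fields of residue characteristic $\geq c$. A secondary, purely bookkeeping, issue is to verify that the Boolean manipulation passing from $\psi$ to the collection $\{\widetilde{\psi}_i\}$ preserves quantifier-freeness in the valued field sort; this is automatic since the cell-decomposition substitutions replace $v(f_j)$ and $\ac(f_j)$ by terms involving only $x,\xi$ and the new variable $l$. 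No further integration identities are required, so beyond Step~1 the argument is a direct computation.
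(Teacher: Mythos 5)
Your proposal is correct and follows essentially the same route as the paper: apply the uniform cell decomposition of Theorem~\ref{cd} to the polynomials in $t$ occurring in $\psi$, substitute the resulting identities for $v(f_j)$ and $\ac(f_j)$ so that $t$ enters only through $l=v(t-c_i)$ and $\ac(t-c_i)=\xi_1$, and evaluate the inner $t$-integral as $q^{-l-1}$ before summing over $\xi$, $l$ and the cells. The one point to tidy is that Theorem~\ref{cd} produces cell data defined \emph{without parameters}, so rather than carrying $y,\rho$ as ``side parameters'' in the coefficients you should, as the paper does, apply the cell decomposition to the full $(y,x,t)$-space $K^{e+m}\times K$ and then slice each cell at fixed $y$ to obtain the sets over which you integrate.
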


\begin{proof} In the formula $\psi$, the variable $t$ appears only in the terms of the form $v(f(y,x,t))$ and
$\ac(f(y,x,t))$, where $f(y,x,t)$ is a polynomial in $(y,x,t)$ with
integer coefficients. With respect to all such polynomials occurring in $\psi$, we apply cell
decomposition to the $(y,x,t)$-space $K^{e+m}\times K$, where $y, x$ are $e$- and $m$-tuples respectively. By Theorem
\ref{cd}, there is some constant $c$ such that for any
non-Archimedean local field $K$ of residue characteristic at least
$c$, the space $K^{e+m}\times K$ has uniform cell decomposition
with respect to the polynomial $f(y,x,t)$. Let
$A_K^{(1)},\dots,A_K^{(N)}$ denote the cells. For each~$y$, let
$B_K^{(i)}(y)=\{(x,t)\in K^m\times K: (y,x,t)\in A_K^{(i)}\}$.
Then

$$\int_{W_K(y,\rho,k)} dxdt=\sum_{1\leq i\leq N} \int_{W_K(y,\rho,k)\cap B_K^{(i)}(y)} dxdt.$$

Let $A_K$ denote one of the cells $A_K^{(i)}$, and $B_K(y)$ the
corresponding $B_K^{(i)}(y)$. It suffices to compute the integral
$\int_{W_K(y,\rho,k)\cap B_K(y)} dxdt$.

The cell $A_K$ has some parameters $\xi=(\xi_1,\dots,\xi_n)$ and
is specified by the datum $(C, b_1(x, \xi), b_2(x, \xi), c(x,
\xi),\lambda)$. By cell decomposition, if $(y,x,t)\in A_K$, then
$(y,x,t)\in A(\xi)$, thus for some $h,c,w,\mu$ as in the
definition of a cell, and depending on $f(y,x,t)$, we have
\begin{displaymath}
\begin{array}{lrcl}
&v(f(y,x,t))&=&v(h(y,x,\xi))+w v(t-c(y,x,\xi)), ~\text{and}\\
&\ac(f(y,x,t))&=&\xi_{\mu(f)}.
\end{array}
\end{displaymath}

Thus on $A_K$ the formula $\psi(x,t,y,\rho,k)$ is equivalent to a
formula, without valued field sort quantifiers, of the form
$\theta(x,\xi,v(t-c(y,x,\xi)),y,\rho,k)$. The set
\nobreak{$W_K(y,\rho,k)\cap B_K(y)$} can be written in the form
$$\bigcup_{\xi\in \resfield^n}\{(x,t)\in K^m\times K:
v(b_1(y,x,\xi)) \Diamond_1 \lambda v(t-c(y,x,\xi))\Diamond_2
v(b_2(y,x,\xi)),$$
$$\ac(t-c(y,x,\xi))=\xi_1, (y,x,\xi)\in C, K\models \theta(x,\xi,v(t-c(y,x,\xi))),y,\rho,k)\}.$$
Since the valuation takes discrete values, this set can be written
as
$$\bigcup_{\xi\in \resfield^n}\bigcup_{l\in \Z}\{(x,t)\in K^m\times K: x\in E_K(\xi,l,y,\rho,k),
v(t-c(y,x,\xi))=l,\ac(t-c(y,x,\xi))=\xi_1\},$$ where
$E_K(\xi,l,y,\rho,k)$ denotes the set $$\{x\in K^m: (y,x,\xi)\in
C, v(b_1(y,x,\xi))\Diamond_1 \lambda l\Diamond_2 v(b_2(y,x,\xi)),
K\models \theta(x,\xi,l,y,\rho,k)\},$$ which is clearly defined
with no quantifiers of the valued field sort. Therefore

$$\int_{W_K(y,\rho,k)\cap B_K(y)} dxdt=\sum_{\xi\in \resfield^n} \sum_{l\in \Z}
\int_{E_K(\xi,l,y,\rho,k)}\left(\int_{\substack{v(t-c(y,x,\xi))=l \\
\ac(t-c(y,x,\xi))=\xi_1}} dt\right) dx.$$ Doing a linear change of
variables $t-c(y,x,\xi)\mapsto t$, we can assume that
$c(y,x,\xi)=0$. The inner integral thus becomes
$$\int_{\substack{v(t)=l \\ \ac(t)=\xi_1}} dt=q^{-l-1}.$$
The proof is complete.
\end{proof}

We now evaluate sums over definable sets in the value group sort.

\begin{lemma}\label{press2} Let $\psi(l_1,\dots,l_m,n)$ be a formula of $\cL_{\mathrm{Pres}\infty}$. Let
\[
\begin{split}
&E=\left\{(l_1,\dots,l_m,n) \in \Z^{m+1}: \Z\cup\{\infty\} \models
\psi(l_1,\dots,l_m,n)\right\} \quad \text{and}\\
&J(s,q)=\sum_{(l_1,\dots,l_m,n)\in E} q^{-ns-l_1-\dots-l_m}.
\end{split}
\]
Suppose that for some open set $S$ in $\R$ and some set $A$ of
positive integers, the sum $J(s,q)$ converges for $s\in S$ and
$q\in A$. Then there exist polynomials $P,Q\in \Z[X,Y]$ such that
$$J(s,q)=\frac{P(q,q^{-s})}{Q(q,q^{-s})}$$
for all $s\in S$ and for all $q\in A$.
\end{lemma}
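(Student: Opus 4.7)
The plan is to combine Presburger quantifier elimination with a cell decomposition in the value group sort and then evaluate the resulting sums explicitly as iterated geometric series.

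First, since $\psi(l_1,\dots,l_m,n)$ is a formula of $\cL_{\mathrm{Pres}\infty}$, Presburger's theorem on quantifier elimination in the language $\cL_{\mathrm{Pres}}=\{+,0,1,\le,\equiv_n\}$ (extended trivially to handle $\infty$, noting that tuples in $E$ live in $\Z^{m+1}$ so the value $\infty$ never actually occurs in $E$) allows me to replace $\psi$ by an equivalent quantifier-free formula. Such a formula is a Boolean combination of linear inequalities with integer coefficients and congruence conditions modulo fixed integers. Hence $E$ is a finite disjoint union $E=\bigsqcup_{j=1}^N E_j$ of \emph{Presburger cells} of the form
\[
E_j=\{(l_1,\dots,l_m,n)\in\Z^{m+1} : \alpha_i(\mathbf{l}_{<i})\,\Diamond_{i,1}\,l_i\,\Diamond_{i,2}\,\beta_i(\mathbf{l}_{<i}),\ l_i\equiv r_i\pmod{d_i}\},
\]
where $\mathbf{l}_{<i}=(l_1,\dots,l_{i-1})$ (with $l_{m+1}:=n$), the bounds $\alpha_i,\beta_i$ are affine functions in $\mathbf{l}_{<i}$ (possibly $\pm\infty$), and $\Diamond_{i,1},\Diamond_{i,2}\in\{<,\le,\text{none}\}$. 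This is a standard consequence of Presburger QE; see for instance Pas \cite{Pas}.

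Having fixed such a cell decomposition, it suffices to show that each partial sum
\[
J_j(s,q)=\sum_{(l_1,\dots,l_m,n)\in E_j} q^{-ns-l_1-\dots-l_m}
\]
is a rational function of $q$ and $q^{-s}$ with integer coefficients in numerator and denominator, since a finite sum of such functions is again of the same form. To evaluate $J_j$, I perform the summation iteratively from the innermost variable outward. For the inner variable, say $n$, the exponent contribution is $q^{-ns}$, and the sum
\[
\sum_{\substack{n\in(\alpha_{m+1},\beta_{m+1})\\ n\equiv r_{m+1}\pmod{d_{m+1}}}} q^{-ns}
\]
is either a finite geometric sum with explicit endpoints $\alpha_{m+1}(\mathbf{l}),\beta_{m+1}(\mathbf{l})$, or, if one of the endpoints is $\pm\infty$, an infinite geometric series $(1-q^{-d_{m+1}s})^{-1}$ times a boundary term. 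In all cases the evaluated sum is a finite $\Z[q,q^{-s}]$-linear combination of terms of the form $q^{-\gamma(\mathbf{l})s}/(1-q^{-d s})$, where $\gamma(\mathbf{l})$ is an affine function of $\mathbf{l}$ with integer coefficients. Convergence is guaranteed by the hypothesis that $J(s,q)$ converges on $S\times A$, ensuring that each infinite geometric series has ratio of absolute value strictly less than one and that the resulting formal rational function genuinely represents the sum.

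Proceeding inductively, the next summation variable $l_m$ contributes a factor $q^{-l_m}$ multiplied by the previously computed expression. Since $\gamma$ is affine in $l_m$, each term is of the form $c(q,q^{-s})\cdot q^{-l_m(1+\delta s)}$ for some integer $\delta$, and summing $l_m$ over an interval intersected with an arithmetic progression again produces a $\Z[q,q^{-s}]$-linear combination of terms $q^{-\gamma'(\mathbf{l}_{<m})s-\varepsilon(\mathbf{l}_{<m})}$ divided by finitely many factors of the form $1-q^{-a-b s}$ with $a,b\in\Z$. Iterating through all remaining variables $l_{m-1},\dots,l_1$ preserves this shape, and after the last summation one obtains a pure rational function in $q$ and $q^{-s}$ with integer coefficients. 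The main technical point to keep track of is that at each stage the bounds depend affinely on the remaining variables, so the class of expressions closes up under the operation "multiply by $q^{-l}$ and sum $l$ over an arithmetic progression bounded by affine functions"; this is precisely what makes the induction work and yields $J(s,q)=P(q,q^{-s})/Q(q,q^{-s})$ with $P,Q\in\Z[X,Y]$ as required.
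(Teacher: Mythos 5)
Your proposal is correct and follows essentially the same route as the paper, which simply defers to Pas's Lemma 5.6 (noting that the argument extends from cofinite sets of primes to arbitrary sets of positive integers $q$); your argument is in effect a reconstruction of Pas's proof via Presburger quantifier elimination, cell decomposition in the value group sort, and iterated geometric summation. The only point worth flagging is that passing from a quantifier-free Presburger formula to cells with \emph{integer-affine} (rather than rational-affine, floor-function) bounds requires refining by further congruence conditions — a standard but nontrivial step that you quote as "standard" without elaboration, and on which the integrality of $P,Q\in\Z[X,Y]$ quietly rests.
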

\begin{proof}

The proof of \cite[Lemma~5.6]{Pas} goes through in our slightly more general setting in which $A$ is an arbitrary set of positive integers (in \cite{Pas}, $A$ is a cofinite set of primes).

\end{proof}

\subsection{Proof of Theorem \ref{theorem-model-theory}}

We assume that the integral converges for $\text{Re}(s)>\sigma_0$. Take $K$ of sufficiently large residue characteristic, say $c$. Throughout the proof, each time we apply Theorems \ref{qe} and
\ref{lem-int}, we need to increase $c$ to exclude further primes.

Note that
\[
\int_{W_K} |\Phi(x)|^s~ dx=\sum_{n \in \Z} q^{-ns}
\cdot \int_{A_n} dx,
\]
where $A_n=\{ a \in W_K: v(\Phi(a))=n\}$. By Theorem \ref{qe}, we may
assume that $\psi(x)$ and $\Phi(x)$ do not contain valued field quantifiers
and using Theorem \ref{lem-int} repeatedly, we deduce that there exists a
positive integer $e$ such that $\int_{A_n} dx$ can be written uniformly
for all non-Archimedean local fields $K$ of sufficiently large
residue characteristic as a finite sum of expressions of the form
\[
q^{-m} \sum_{\xi\in \resfield^e}\sum_{\substack{l_1,\dots,l_m \in \Z \\
\phi(\xi,l_1,\dots,l_m,n)}}q^{-l_1-\dots-l_m},
\]
where by Theorem \ref{qe}, $\phi$ does not contain valued field variables.
By \cite[Lemma 5.3]{Pas}, $\phi$ is equivalent to
\[
\bigvee_{i=1}^k(\chi_i(\xi) \wedge \theta_i(l_1,\dots,l_m,n))
\]
where  $\chi_i$ is an $\cL_{\mathrm{Res}}$-formula and $\theta_i$ an
$\cL_{\mathrm{Pres}\infty}$-formula for each $i$. Thus $\int_{W_K}
|\Phi(x)|_{\maxideal}^s~ dx$ is a finite sum of expressions of the form
\[
q^{-m} ~\sum_{i=1}^k\sum_{\substack{\xi \in \resfield^e \\ \resfield \models \chi_i(\xi)}} 1 \sum_{\substack{l_1,\dots,l_m,n \in \Z \\
\Z \cup\infty \models
\theta_i(l_1,\dots,l_m,n)}}q^{-ns-l_1-\dots-l_m}.
\]
Note that $\sum_{\substack{\xi\in \resfield^e \\ \chi_i(\xi)}} 1$ is
the cardinality of the set defined by $\chi_i(\xi)$ in
$\resfield^e$.
It remains to show that the sums $\sum_{\substack{l_1,\dots,l_m,n \in \Z \\
\theta_i(l_1,\dots,l_m,n)}}q^{-ns-l_1-\dots-l_m}$ can be written
as rational functions over $\Z$ uniformly in $q$. These sums are
convergent for all $s\in \R_{>\sigma_0}$ since all the
terms in the sum are positive and $\int_{W_K}|\Phi(x)|^s~ dx$ is convergent for $\text{Re}(s)>\sigma_0$. By Lemma \ref{press2}, $\int_{W_K}|\Phi(x)|^s~ dx$ is equal to a rational function $P(q, q^{-s})/Q(q,q^{-s})$ on some open subset $S$ of $\R$. Since the integral is holomorphic, it is equal to this rational function on some suitable half-plane, and the proof is complete.

\begin{cor}\label{cor-of-B} Let $x$ be an $m$-tuple of field variables. Let $W_K$ be as in Theorem \ref{theorem-model-theory}.
Let $\Theta: K^m\to \Z$ be an $\cL_{\vf}$-definable function.
Assume there is some $\sigma_0\in \R$ such that the integral
$\int_{W_K} q^{-\Theta(x)s} dx$ converges for all $s\in \C$ with
$\text{Re}(s)> \sigma_0$ for all non-Archimedean local fields $K$
of sufficiently large residue characteristic. Then there exist a
constant $N$, rational functions $R_1(X,Y),\dots,R_k(X,Y)$ over
$\Z$, and formulae $\psi_1(x),\dots,\psi_k(x)$ in the language of
rings such that for all non-Archimedean local fields $K$ with
residue field $\resfield$ of characteristic at least $N$, and for
all $s\in \C$ with $\text{Re}(s)> \sigma_0$, we have
$$\int_{W_K} q^{-\Theta(x)s} dx=m_1(\resfield)R_1(q,q^{-s})+\dots+m_k(\resfield)R_k(q,q^{-s}).$$
where $m_i(\resfield)$ denotes the cardinality of the set of
$\resfield$-points of $\psi_i(x)$, $q$ is the cardinality of the
residue field $\resfield$, and $dx$ denotes the Haar measure on
$K^m$ normalized such that $\lri^m$ has measure $1$.
\end{cor}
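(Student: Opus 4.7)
The plan is to reduce the statement directly to Theorem~\ref{theorem-model-theory} by introducing an auxiliary valued-field variable $t$ whose valuation is forced to equal $\Theta(x)$. Set
\[
W'_K := \{(x,t) \in K^{m+1} : x \in W_K \text{ and } v(t) = \Theta(x)\},
\]
which is $\cL_{\vf}$-definable because the graph of the definable function $\Theta$ is. For each fixed $x$ the fibre $\{t : v(t) = \Theta(x)\}$ has Haar measure $(1-q^{-1})\, q^{-\Theta(x)}$, on which $|t|$ is constantly $q^{-\Theta(x)}$. Fubini therefore gives
\[
\int_{W'_K} |t|^u \, dx\, dt = (1 - q^{-1})\, \int_{W_K} q^{-\Theta(x)(u+1)}\, dx,
\]
which, after setting $u = s - 1$, rearranges to
\[
\int_{W_K} q^{-\Theta(x)s}\, dx = \frac{q}{q-1}\, \int_{W'_K} |t|^{s-1}\, dx\, dt.
\]

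The convergence hypothesis for $\mathrm{Re}(s) > \sigma_0$ on the left transfers at once to convergence of the right-hand integral in $u$ for $\mathrm{Re}(u) > \sigma_0 - 1$, because the two integrands differ only by a nowhere-vanishing rational factor. I would then apply Theorem~\ref{theorem-model-theory} to $\int_{W'_K} |t|^u \, dx\, dt$ with definable function $\Phi(x,t) = t$, obtaining a constant $N$, formulae $\psi_1, \dots, \psi_k$ in the language of rings, and rational functions $R_i(X,Y)$ over $\Z$ such that
\[
\int_{W'_K} |t|^u \, dx\, dt \;=\; \sum_{i=1}^k m_i(\resfield)\, R_i(q, q^{-u})
\]
uniformly for all non-Archimedean local fields of residue characteristic at least $N$ and for $\mathrm{Re}(u) > \sigma_0 - 1$.

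Finally, substituting $u = s - 1$ (so that $q^{-u} = q \cdot q^{-s}$) and setting $R'_i(X,Y) := \tfrac{X}{X-1}\, R_i(X, XY)$ yields
\[
\int_{W_K} q^{-\Theta(x)s}\, dx \;=\; \sum_{i=1}^k m_i(\resfield)\, R'_i(q, q^{-s}),
\]
as required. The whole reduction is straightforward and presents no real obstacle; the only point requiring brief care is that the substitution $(X,Y) \mapsto (X, XY)$ preserves rationality over $\Z$, which is immediate since substituting the polynomial $XY \in \Z[X,Y]$ for $Y$ into a quotient $P/Q$ with $P,Q \in \Z[X,Y]$ produces another such quotient, and multiplication by $X/(X-1)$ stays within this class.
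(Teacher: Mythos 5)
Your proof is correct, but it takes a genuinely different route from the paper's. The paper's proof invokes an external structural result (\cite[Proposition 4.15]{Avni}): for sufficiently large residue characteristic, $K^m$ is covered by definable pieces $A_i$ on which $\Theta(x)=\tfrac{1}{n_i}v(Q_i(x))$ for rational functions $Q_i$ and positive integers $n_i$, so that $q^{-\Theta(x)s}=|Q_i(x)|^{s/n_i}$ there, and Theorem~\ref{theorem-model-theory} is then applied to each summand $\int_{A_i\cap W_K}|Q_i(x)|^{s/n_i}\,dx$. You instead adjoin one auxiliary valued-field variable $t$ constrained by $v(t)=\Theta(x)$; since $\Theta$ is a definable function into the value group, its graph is definable and hence so is $W'_K$, the fibre computation
\[
\int_{\{t\,:\,v(t)=n\}}|t|^{u}\,dt=(1-q^{-1})q^{-n(u+1)}
\]
is correct (and Tonelli justifies the interchange once convergence for $\mathrm{Re}(u)>\sigma_0-1$ is noted, which follows exactly as you say), and the substitutions $u=s-1$ and $(X,Y)\mapsto(X,XY)$ together with the factor $X/(X-1)$ stay within rational functions over $\Z$. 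Your reduction is more self-contained, needing nothing beyond Fubini and Theorem~\ref{theorem-model-theory} itself, and it also avoids the fractional exponent $s/n_i$ appearing in the paper's argument, which a priori yields rational functions in $q^{-s/n_i}$ rather than $q^{-s}$ and so requires an extra (easy, since $\Theta$ is integer-valued) remark. The only cost is one additional valued-field variable, which is immaterial. Both arguments are valid.
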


\begin{proof} By \cite[Proposition 4.15]{Avni} there exist a constant $N$, positive integers $n_1,...,n_l$,
formulae $\phi_1(x),...,\phi_l(x)$ and rational functions
$Q_1,...,Q_l \in \mathbb{Q}(x)$, such that for all non-Archimedean
local fields of residue characteristic greater than $N$, the
domain $K^m$ is a union of the sets $A_i$ defined by $\phi_i$ and
for all $x \in A_i$ one has $\Theta(x)=\frac{1}{n_i}v(Q_i(x))$.
Therefore,
$$\int_{W_K} q^{-\Theta(x)s} dx=\sum_{i=1}^l\int_{A_i \cap W_K} |Q_i(x)|^{s/n_i} dx,$$
and by applying Theorem \ref{theorem-model-theory} on each of the
summands the result follows.

\end{proof}

\section{Measure-preserving change of variables}\label{measure-transfer}

\subsection{Chevalley groups}
We recall the construction of Chevalley groups (see
\cite{Steinberg} for more details). Let $\mathfrak{g}$ be a
$d$-dimensional semisimple complex Lie algebra and let
$\mathfrak{h}$ be a Cartan subalgebra. Let $\Sigma \subset
\mathfrak{h}^* $ be the set of roots of $\mathfrak{g}$. Let
$\Sigma^+=\{\alpha_1,\ldots,\alpha_r\}$ be a set of positive
roots, $\Sigma^\circ=\{\alpha_1,...,\alpha_\ell\} \subset
\Sigma^+$ the simple roots and set $\Sigma^-=\Sigma \backslash
\Sigma^+$. Then $\mathfrak{g}$ decomposes as $\mathfrak{h} \oplus
(\oplus_{\alpha \in \Sigma} \mathfrak{g}_\alpha)$ with respect to
the adjoint $\mathfrak{h}$-action.

Let $\{X_\alpha \suchthat \alpha \in \Sigma\} \cup \{H_\alpha
\suchthat \alpha \in \Sigma^\circ\}$ be a Chevalley basis for
$\mathfrak{g}$. For $\alpha \in \Sigma$ the map $x_\alpha:
\mathbb{G}_a \to \mathrm{GL}_d$  defined by $x_\alpha(t):=\exp(t
~\!\! \mathrm{ad}(X_\alpha))$ is a polynomial isomorphism defined
over $\mathbb{Q}$ onto its image $\frak X_\alpha$. For any simple
root $\alpha \in \Sigma^\circ$ let
$h_{\alpha}(t):=x_{\alpha}(t)x_{-\alpha}(-t^{-1})x_{\alpha}(t)$,
which is an isomorphism $\mathbb{G}_m \to \mathcal H_\alpha
\subset \mathrm{GL}_d$. Let
\[
U= \frak X_{\alpha_1}\dots \frak X_{\alpha_{r}}, \qquad T=
\mathcal H_{\alpha_1}\dots \mathcal H_{\alpha_{\ell}}, \qquad
U^-=\frak X_{-\alpha_1}\dots \frak X_{-\alpha_{r}},
\]
where we have fixed a total ordering on the roots which refines
the standard partial ordering to make the product uniquely
defined. The group $G$ generated by all the groups $\frak
X_\alpha$ ($\alpha \in \Sigma$) is called a Chevalley group. For
any local field $K$ the group of $K$-points of $G$ is locally
compact and hence admits a Haar measure, which is both left and
right invariant. Our aim is to show that it is expressible as a definable integral.
We have the following maps
\[
\mathbb{G}_a^{\Sigma^-} \times \mathbb{G}_m^{\Sigma^\circ} \times
\mathbb{G}_a^{\Sigma^+} \overset{\Psi}{\longrightarrow} U^- \times
T \times U \overset{\Phi}{\longrightarrow} G,
\]
where $\Psi=(\Psi^-,\Psi^\circ, \Psi^+)$ is defined by
\[
\Psi^\pm((a_\alpha)_{\alpha \in \Sigma^\pm})=\prod_{\alpha \in
\Sigma^\pm} x_\alpha(a_\alpha) \quad \mathrm{and} \quad
\Psi^\circ((a_\alpha)_{\alpha \in \Sigma^\circ})=\prod_{\alpha \in
\Sigma^\circ} h_\alpha(a_\alpha),
\]
and $\Phi$ is the product map: $\Phi(x,t,y)=xty$. It is well known
(cf.\ \cite[p.~61, Theorem~7]{Steinberg}) that $\Phi$ is an isomorphism of algebraic
varieties whose image is $U^-TU$. For any
local field $K$, the measure of $U^-(K)T(K)U(K)$ in $G(K)$ is full.

\subsection{Proof of Theorem \ref{theorem-coord}}

For a locally compact group $G$ we let $\Delta_G:G \to \R^+$
denote its modular function. It can be viewed also as the modulus
of the automorphism defined by conjugation. The following theorem
is stated in \cite[Theorem 8.32]{Knapp} for real Lie groups, however, the proof works
{\em mutatis mutandis} for locally compact groups.

\begin{theo}\label{de-haar} Let $G$ be a locally compact group and let $S$ and $T$ be closed subgroups such that
$S\cap T$ is compact, the multiplication map $S\times T\to G$ is an open map,
and the set of products $ST$ exhausts $G$ except possibly for a
set of Haar measure zero. Let $\Delta_T$ and $\Delta_G$ denote the
modular functions of $T$ and $G$ and let $ds$ and $dt$ denote left Haar measures on $S$ and $T$ respectively. Then the measure $\nu$ on $G$
defined by
$$\int_{G} f d\nu=\int_{S\times T} f(st) \Delta_T(t)/\Delta_G(t) ds dt$$
for all measurable functions $f\geq 0$ on $G$, is a left Haar
measure. \end{theo}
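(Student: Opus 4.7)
The plan is to follow Knapp's proof of~\cite[Theorem~8.32]{Knapp} verbatim, replacing any Lie-theoretic arguments by the corresponding algebraic facts that hold for arbitrary locally compact groups. The key ingredients---left and right Haar measures on $S,T,G$, the modular functions $\Delta_S,\Delta_T,\Delta_G$, and a quotient-measure construction for proper group actions---all exist in the locally compact setting, so the substance is in verifying that the bookkeeping carries over.

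I would proceed in three steps. First, I would verify that the formula defines a positive Radon measure $\nu$ on $G$. The multiplication map $\phi\colon S\times T\to G$ is assumed open, and its fibers over $ST$ are the compact orbits $\{(sh^{-1},ht):h\in S\cap T\}$ of the diagonal action of $S\cap T$. Since this action preserves the integrand $f(st)\Delta_T(t)/\Delta_G(t)$ and $S\cap T$ is compact, the pushforward of $(\Delta_T(t)/\Delta_G(t))\,ds\,dt$ under $\phi$ is a well-defined Radon measure on $ST$, which we extend by zero to the Haar-null set $G\setminus ST$. Second, I would establish left-invariance: for $g\in G$, off a null set we factor $gs=s'(g,s)\,\tau(g,s)$ with $s'\in S$ and $\tau\in T$, so that $gst=s'\cdot(\tau t)$. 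The change of variables $(s,t)\mapsto(s',\tau t)$ transforms $\int_{S\times T} f(gst)(\Delta_T(t)/\Delta_G(t))\,ds\,dt$ back to $\int_G f\,d\nu$; the correction factor $\Delta_T(t)/\Delta_G(t)$ is designed precisely so that the Jacobian $\Delta_T(\tau)^{-1}$ produced by the left translation $t\mapsto\tau t$ on $T$ cancels against the factor $\Delta_G(\tau)$ needed to reinterpret it as a translation inside $G$. Third, since $\nu$ is positive and nonzero, uniqueness of Haar measure up to scalar forces $\nu$ to be a left Haar measure.

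The main obstacle is the modular-function bookkeeping in the invariance step: in Knapp's proof the relevant Jacobians are computed as determinants of adjoint representations on Lie algebras, whereas here one must use the abstract defining property of $\Delta_H$ (as the modulus of inner automorphisms of $H$) together with the Weil-type identities relating left Haar measures on $S$, $T$, and $G$ under the factorization $G=ST$. These identities are purely algebraic and hold for any locally compact group, so the proof transfers \emph{mutatis mutandis}, as claimed.
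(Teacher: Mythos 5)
Your proposal matches the paper's treatment exactly: the paper offers no proof beyond the remark that Knapp's argument for \cite[Theorem~8.32]{Knapp} carries over \emph{mutatis mutandis} to locally compact groups, which is precisely the strategy you adopt and then flesh out. Your three-step outline (well-definedness via the compact $S\cap T$-fibers of the open multiplication map, invariance via the modular-function bookkeeping encoded in the Weil-type identities, and uniqueness of Haar measure) is a correct account of how that transfer goes.
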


The following is a more elaborate version of Theorem
\ref{theorem-coord}.

\begin{theo}\label{theorem-coord2} Let $G$ be a Chevalley group defined over $\mathbb{Z}$ of dimension $d$.
Let $\Phi$ and $\Psi$ be as above. Let $K$ be a non-Archimedean
local field with ring of integers $\lri$ and residue field
$\resfield$ of cardinality $q$. Let $d\lambda$ denote the additive
Haar measure on $K^d=K^{\Sigma^-} \times K^{\Sigma^\circ} \times
K^{\Sigma^+}$ normalized on $\lri^d$. Let $\mu$ be the measure
given by
\begin{equation}\label{haar.integral.formula}
\int_{G(K)} f(g) d\mu=k_G(q)^{-1}\int_{K^{\Sigma^-} \times
K^{\Sigma^\circ} \times K^{\Sigma^+}} \prod_{\beta \in
\Sigma^\circ} |a_\beta|^{-\langle \rho,\beta\rangle-1}
f(\Phi(\Psi(a)))~ d\lambda,
\end{equation}
as $f$ runs through all complex valued Borel functions on $G(K)$,
where $\rho=\sum_{\delta \in \Sigma^+} \delta$,
$a=\left((a_\alpha)_{\alpha \in \Sigma^-},(a_\beta)_{\beta \in
\Sigma^\circ}, (a_\gamma)_{\gamma \in \Sigma^+}\right) \in K^d$
and $k_G(q)=|G(\resfield)|/q^{\dim G}$. Then $\mu$ is a left and
right Haar measure on $G(K)$ normalized such that
$\mu(G(\lri))=1$.
\end{theo}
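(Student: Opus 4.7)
The plan is to use the big-cell factorization $G=U^-TU$, apply Theorem~\ref{de-haar} to recognize the right-hand side of~(\ref{haar.integral.formula}) (without the prefactor $k_G(q)^{-1}$) as a left Haar measure $\mu_0$ on $G(K)$, and then pin down the constant by computing $\mu_0(G(\lri))$.

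For the first step, I would exploit that $\Phi$ is an isomorphism of algebraic varieties onto the Zariski-open, dense subset $U^-TU\subset G$, so that $G(K)\setminus U^-(K)T(K)U(K)$ has Haar measure zero and can be ignored. Two successive applications of Theorem~\ref{de-haar}---first to $G=U^-\cdot B$ with $B=TU$, then to $B=T\ltimes U$---together with the unimodularity of $G$ (which is perfect, being generated by the $\mathfrak{X}_\alpha$'s) and of the unipotent groups $U^\pm$, yield a left Haar measure on $G(K)$ given by
$$\int_G f\, d\mu_0=\int_{U^-(K)\times T(K)\times U(K)} f(u^-tu)\,\Delta_B(t)\,du^-\,dt\,du,$$
where $du^\pm$ and $dt$ are left Haar measures on $U^\pm(K)$ and $T(K)$. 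The polynomial isomorphisms $\Psi^\pm$ identify Haar measure on $U^\pm(K)$ with additive Haar measure on $K^{\Sigma^\pm}$, and $\Psi^\circ$ identifies the multiplicative Haar measure on $T(K)$ with $\prod_{\beta\in\Sigma^\circ}|a_\beta|^{-1}\,da_\beta$ on $(K^\times)^{\Sigma^\circ}$. Using the Chevalley commutation relations $h_\beta(a)x_\alpha(s)h_\beta(a)^{-1}=x_\alpha(a^{\langle\alpha,\beta^\vee\rangle}s)$, I would compute $\Delta_B(\Psi^\circ(a))=\prod_{\beta\in\Sigma^\circ}|a_\beta|^{-\langle\rho,\beta\rangle}$ (with the pairing normalized as in the theorem). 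Multiplying the three contributions gives precisely the weight $\prod_{\beta}|a_\beta|^{-\langle\rho,\beta\rangle-1}$ of~(\ref{haar.integral.formula}), so $\mu_0$ is the left (hence, by unimodularity of $G$, also right) Haar measure defined by the right-hand side of~(\ref{haar.integral.formula}) without the factor $k_G(q)^{-1}$.

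For the normalization, I would compute $\mu_0(G(\lri))$ directly. Smoothness of the Chevalley scheme over $\mathbb{Z}$ together with Hensel's lemma gives that the reduction map $G(\lri)\to G(\resfield)$ is surjective, so $G(\lri)$ is the disjoint union of $|G(\resfield)|$ cosets of the first congruence subgroup $G^1(\lri)$. By left-invariance of $\mu_0$ this gives $\mu_0(G(\lri))=|G(\resfield)|\cdot\mu_0(G^1(\lri))$. In the $(u^-,a,u)$-coordinates, $G^1(\lri)$ corresponds to $u^-\in U^-(\maxideal)$, $u\in U(\maxideal)$, and $a_\beta\in 1+\maxideal$ for each $\beta\in\Sigma^\circ$; on this set $|a_\beta|=1$, so the weight factor in the integrand equals $1$, and the total additive Haar measure is $q^{-|\Sigma^-|}\cdot q^{-|\Sigma^\circ|}\cdot q^{-|\Sigma^+|}=q^{-\dim G}$. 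Hence $\mu_0(G(\lri))=|G(\resfield)|\,q^{-\dim G}=k_G(q)$, and the prefactor $k_G(q)^{-1}$ in~(\ref{haar.integral.formula}) enforces $\mu(G(\lri))=1$.

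The hard part will be the modular-function computation: tracking left versus right Haar measure conventions through the two applications of Theorem~\ref{de-haar}, converting multiplicative Haar on $T(K)$ to additive with a $|a_\beta|^{-1}$ Jacobian, and matching the pairing conventions in the Chevalley commutation relations so that the final exponent is exactly $-\langle\rho,\beta\rangle-1$. A secondary technical point is to verify that the complement of the big cell can be safely discarded and that the change of variables through $\Phi\circ\Psi$ is valid Haar-almost everywhere.
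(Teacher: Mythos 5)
Your proposal is correct and follows essentially the same route as the paper: big-cell decomposition $U^-TU$, Knapp's quotient-measure theorem (Theorem~\ref{de-haar}) combined with unimodularity of $G$ and $U^{\pm}$, the Chevalley commutation relations to compute the modular function of the Borel, and a coordinate computation of the measure of a compact open subgroup to fix the constant. The only divergence is in the normalization step, where you compute $\mu_0(G^1(\lri))$ via the principal congruence subgroup while the paper computes $\mu_0(I(\lri))$ via the Iwahori subgroup; both rest on the same coordinate description of the preimage under $\Phi\circ\Psi$ (Steinberg's Theorem~22) and yield $\mu_0(G(\lri))=k_G(q)$.
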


\begin{proof}

Throughout we drop the $K$ when writing $U(K), U^-(K), \frak X_{\alpha}(K)$. Let $da$ denote an additive Haar measure on $K$ and let
$d\mu_{\alpha}={x_{\alpha}}_{*}(da)$ be the push forward of $da$
along $x_\alpha$, defined by
$$\int_{\frak X_{\alpha}} f(g)d\mu_{\alpha}=\int_K f(x_{\alpha}(a)) da$$
for all Borel functions $f$. Then the measure $\mu_{\alpha}$ is a
Haar measure on $\frak X_{\alpha}$. Similarly, by pushing forward
the Haar measure $da/|a|$ on $K^\times$ to ${\mathcal H}_{\alpha}$
we obtain a Haar measure on ${\mathcal H}_{\alpha}$ as
$d\tau_{\alpha}={h_{\alpha}}_{*}(da/|a|)$. On $U$ (and similarly on $U^-$) we define a measure via the
formula
\begin{equation}\label{Haar.on.U}
\int_{U} f(u) du:=\int_{\prod_{\alpha \in \Sigma^+} \frak
X_\alpha} f\left(\prod_{\alpha \in
\Sigma^+}x_\alpha(a_\alpha)\right) \prod_{\alpha \in
\Sigma^+}da_{\alpha}.
\end{equation}
The measure $du$ on $U$, and similarly the measure $du^-$ on
$U^-$, is indeed a Haar measure by repeated use of Theorem
\ref{de-haar}, utilizing the fact that any product of the $\frak
X_\alpha$'s is unipotent, and the unimodular function of a unipotent group is trivial (being equal to the determinant of unipotent matrices defined by conjugation with unipotent elements).
For $T$ we have
\begin{equation}\label{Haar.on.T}
\int_{T} f(t) dt:=\int_{\prod_{\beta \in \Sigma^\circ} \mathcal
H_\beta} f\left(\prod_{\beta \in
\Sigma^\circ}h_\beta(a_\beta)\right) \prod_{\beta \in
\Sigma^\circ}\frac{da_{\beta}}{|a_\beta|},
\end{equation}
which is clearly a Haar measure (the modular functions in Theorem~\ref{de-haar} are trivial since T is abelian).

Since the complement of the $K$-points of $U^-TU$ in $G(K)$ has
measure zero and the groups $U$ and $U^-$ are unimodular, the
measure defined by
\[
\int_{U^-\times T\times U} \frac{1}{\Delta_{U^-T}(t)}
f(\Phi(u^-,t,u))du^- dt du
\]
is a Haar measure on $G(K)$ by Theorem \ref{de-haar}. Since the
value of the modular function at $t$ is the modulus of conjugation
$u \mapsto \mathrm{mod}(t u t^{-1})$, we can compute
$\Delta_{U^-T}(t)$ using the following equality \cite[Lemma
20(c)]{Steinberg}
\begin{equation}\label{torus.action.single}
h_\beta(b)x_{\alpha}(a)h_\beta(b)^{-1}=x_\alpha(b^{\langle
\alpha,\beta\rangle}a), \qquad \alpha,\beta \in \Sigma,~0 \ne
b,a\in K,
\end{equation}
which allows us to compute the modulus of conjugation by elements
of the torus
\begin{equation}\label{torus.action}
\left(\prod_{\beta\in\Sigma^\circ}h_\beta(b_\beta)\right)\prod_{\alpha\in\Sigma^-}x_{\alpha}(a_\alpha)
\left(\prod_{\beta\in\Sigma^\circ}h_\beta(b_\beta)\right)^{-1}=\prod_{\alpha\in\Sigma^-}x_{\alpha}\left(\prod_{\beta\in\Sigma^\circ}
b_\beta^{\langle \alpha, \beta\rangle}a_\alpha\right).
\end{equation}
The resulting modulus of conjugation by $\prod_{\beta \in
\Sigma^\circ}h(b_\beta) \in T$ (in terms of the affine
coordinates) is
\begin{equation}\label{modulus}
\left|\prod_{\alpha\in\Sigma^-}\prod_{\beta\in\Sigma^\circ}
b_\beta^{\langle
\alpha,\beta\rangle}\right|=\left|\prod_{\beta\in\Sigma^\circ}
b_\beta^{\langle \sum_{\alpha \in \Sigma^-} \alpha,
\beta\rangle}\right|=\left|\prod_{\beta\in\Sigma^\circ}
b_\beta^{-\langle \rho, \beta\rangle}\right|.
\end{equation}

Combining \eqref{Haar.on.U}, \eqref{Haar.on.T} and
\eqref{modulus}, the following defines a Haar integral on $G(K)$
$$\int_{K^{\Sigma^-} \times K^{\Sigma^\circ} \times K^{\Sigma^+}}  \prod_{\beta \in \Sigma^\circ}
|a_\beta|^{-\langle \rho,\beta\rangle-1} f(\Phi(\Psi(a)))~
d\lambda.$$

We show that the measure $\mu$ as defined in
\eqref{haar.integral.formula} is normalized such that
$\mu(G(\lri))=1$. Let $I(\lri)$ be the inverse image of a Borel
subgroup $B(\resfield) \subset G(\resfield)$ under the reduction
map $G(\lri) \to G(\resfield)$, i.e. an Iwahori subgroup of
$G(K)$. The inverse image of $I(\lri)$ under $\Phi \circ \Psi$ is
\[
J:=\{(a_1,...,a_r,b_1,..,b_\ell,c_1,...,c_r) : v(a_i) > 0, v(b_j)=0,
v(c_k) \ge 0 \} \subset K^d,
\]
see \cite[Theorem 22]{Steinberg} for more details.

The additive measure of $J$ is
$(1-q^{-1})^{|\Sigma^\circ|}q^{-|\Sigma^-|}$. It follows that
\[
\begin{split}
\mu(G(\lri))&=\mu(I(\lri))[G(\lri):I(\lri)]\\
&=\left(k_G(q)^{-1}\int_J 1 d\lambda\right)\cdot \frac{|G(\resfield)|}{|B(\resfield)|}\\
&=k_G(q)^{-1}(1-q^{-1})^{|\Sigma^\circ|}q^{-|\Sigma^-|}\frac{|G(\resfield)|}{(q-1)^{|\Sigma^\circ|}
q^{|\Sigma^+|}}\\
&=k_G(q)^{-1}\frac{|G(\resfield)|}{q^{\dim G}}=1,
\end{split}
\]
which completes the proof of the Theorem.
\end{proof}

\section{Counting conjugacy classes in quotients of Chevalley groups}\label{section-counting-cc}

Let $G$ be a Chevalley group. In this section we express the conjugacy class zeta function of $G(\lri)$ in
terms of a definable integral over $\lri^d$ and prove Theorem
\ref{theorem-cc}. As in the introduction, we let $c_m=c_m(G,\lri)$
be the number of conjugacy classes in $G(\lri, m)$ and we put
\begin{equation}
\zeta^{\cc}_{G(\lri)}(s)=\sum_{m=0}^{\infty} c_m~ q^{-m s}\ \ \ \
(s\in\C).
\end{equation}
We write $k_G(q)=|G(\resfield)|q^{-d}$, where $q=|\resfield|$, $p=\mathrm{char}({\resfield})$ and $d=d_G=\dim G$.

\begin{lemma}\label{lemma-index-dimension-formula}
Let $X$ be a smooth scheme defined over $\lri$. Then
for all $m\geq 1$
\[
|X(\lri/\maxideal^m)|=|X(\resfield)|q^{(m-1)\dim X}.
\]
In particular the formula holds for algebraic groups defined over $\lri$.

\end{lemma}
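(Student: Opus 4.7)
The plan is to prove the formula by induction on $m$, using the reduction maps
\[
\pi_m : X(\lri/\maxideal^m) \longrightarrow X(\lri/\maxideal^{m-1})
\]
and exploiting smoothness of $X$ over $\lri$. The base case $m=1$ is tautological, so the whole content is to show that each $\pi_m$ (for $m \geq 2$) is surjective with all fibers of cardinality exactly $q^{\dim X}$. Once this is established, counting gives
\[
|X(\lri/\maxideal^m)| = q^{\dim X} \cdot |X(\lri/\maxideal^{m-1})|,
\]
and iterating down to $m=1$ yields the claimed formula.

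The surjectivity of $\pi_m$ is a standard consequence of the formal smoothness of $X$ over $\lri$: given any $\lri/\maxideal^{m-1}$-point of $X$, the infinitesimal lifting property for smooth morphisms (applied to the square-zero thickening $\lri/\maxideal^m \twoheadrightarrow \lri/\maxideal^{m-1}$, whose kernel $\maxideal^{m-1}/\maxideal^m$ has square zero since $m \geq 2$) provides a lift. This is essentially Hensel's lemma for smooth schemes and is completely standard.

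For the fiber cardinality, I would fix a point $\bar x \in X(\lri/\maxideal^{m-1})$ and identify the fiber $\pi_m^{-1}(\bar x)$ with the set of $\lri/\maxideal^m$-points of $X$ reducing to $\bar x$. The smooth-lifting formalism shows that this fiber is a torsor under $\Hom_{\resfield}(\bar x^*\Omega^1_{X/\lri}, \maxideal^{m-1}/\maxideal^m)$. Since $X$ is smooth of relative dimension $\dim X$, the module $\bar x^*\Omega^1_{X/\lri}$ is free of rank $\dim X$ over the residue ring at $\bar x$, and $\maxideal^{m-1}/\maxideal^m$ is a one-dimensional $\resfield$-vector space of size $q$; hence the torsor (which is non-empty by the preceding paragraph) has cardinality $q^{\dim X}$, independent of $\bar x$.

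The main (very mild) obstacle is being careful that smoothness is being used in both directions: for existence of a lift and for the computation of the fiber as a free module of the expected rank. Alternatively, one can avoid the cotangent/tangent language altogether by using that $X$ is Zariski-locally étale over $\mathbb{A}^{\dim X}_{\lri}$: on an étale neighbourhood of any $\lri$-point one has a bijection between lifts in $X$ and lifts in $\mathbb{A}^{\dim X}_{\lri}$, where the count is the obvious $q^{\dim X}$. Either route completes the proof, and the last sentence of the statement is then immediate because algebraic groups over $\lri$ are smooth over $\lri$.
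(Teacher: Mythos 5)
Your proof is correct. It takes a mildly different route from the paper's: you induct on $m$, showing that each reduction map $X(\lri/\maxideal^m)\to X(\lri/\maxideal^{m-1})$ is surjective (infinitesimal lifting along the square-zero extension) with fibers that are torsors under $\Hom_{\resfield}(\bar x^*\Omega^1_{X/\lri}\otimes\resfield,\,\maxideal^{m-1}/\maxideal^m)$, a group of order $q^{\dim X}$. The paper instead does the count in one step: it fixes a $\resfield$-point, notes that by smoothness the complete local ring of $X\times_\lri \lri/\maxideal^m$ at that point is $\lri/\maxideal^m[[t_1,\dots,t_{\dim X}]]$, and counts the $\lri/\maxideal^m$-algebra maps from this ring to $\lri/\maxideal^m$ with prescribed reduction modulo $\maxideal$ --- namely the $q^{(m-1)\dim X}$ choices of images of the $t_i$ in $\maxideal/\maxideal^m$. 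The two arguments use the same input (smoothness gives both existence and the expected number of lifts) and the same implicit convention that $\dim X$ means the relative dimension over $\lri$; yours is the standard deformation-theoretic induction, the paper's collapses the induction into a single application of the structure theorem for complete local rings at smooth points. Your alternative via a Zariski-local \'etale map to $\mathbb{A}^{\dim X}_{\lri}$ is essentially the paper's argument in different clothing. Either way the final sentence about algebraic groups follows since group schemes of finite type over $\lri$ appearing here are smooth.
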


\begin{proof} This is well-known, but we sketch a proof for completeness.
A point in $X(\lri/\maxideal^m)$ corresponds to a section $\mathrm{Spec}(\lri/\maxideal^m)
 \to X \times _\lri \lri/\maxideal^m$.
We claim that for any section $f_0: \mathrm{Spec}(\resfield)
 \to X \times _\lri \resfield$ there are $q^{(m-1)\dim X}$ sections $f$ which make the following diagram commute
\[
\begin{matrix}
X \times_\lri \lri/\maxideal^m & \overset{i}{\hookleftarrow} & X \times_\lri \lri/\maxideal \\
f \uparrow ~ \downarrow & & f_0 \uparrow ~ \downarrow \\
\mathrm{Spec}(\lri/\maxideal^m) & \hookleftarrow & \mathrm{Spec}(\lri/\maxideal).
\end{matrix}
\]
Indeed, such a map $f$ should factor through the spectrum of the complete local ring at the image of $i \circ f_0$. By smoothness this ring is isomorphic to $\lri/\maxideal^m[[t_1,...,t_{\dim X}]]$. It follows that we need to count the number of maps $\lri/\maxideal^m[[t_1,...,t_{\dim X}]] \to \lri/\maxideal^m$ with fixed image modulo $\maxideal$. The number of such maps is $q^{(m-1)\dim X}$, and the assertion follows.

\end{proof}

\begin{prop}\label{proposition-counting-cc}
$$\zeta_{G(\lri)}^{\cc}(s)-1=\frac{k_G(q)(Z_{G(\lri)}(s-d)-1)}{1-q^{s-d}},$$
where $$Z_{G(\lri)}(s)=\int_{G(\lri)\times G(\lri)}
||\{(\mathbf{xy-yx})_{ij}\suchthat  1\leq i,j\leq d\} ||^sd\nu$$
and $\nu$ is the normalized Haar measure on $G(\lri)\times G(\lri)$. Here
$\mathbf{x}=(x_{ij})$ and $\mathbf{y}=(y_{ij})$ are $d\times d$
matrices of indeterminates. The norm in the integrand is $||\{z_i:
i\in I\}||:=\max_{i\in I}\{|z_i| \}$.
\end{prop}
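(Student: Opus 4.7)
The proof will rely on the classical Burnside-type identity that in any finite group $H$, the number of conjugacy classes equals $|H|^{-1}|\{(x,y)\in H^2 : xy=yx\}|$. Applying this to $H=G(\lri,m)$ and noting that two elements $X,Y\in G(\lri)$ descend to a commuting pair in $G(\lri/\maxideal^m)$ precisely when every entry of $XY-YX$ lies in $\maxideal^m$, i.e.\ when $\|XY-YX\|\leq q^{-m}$, we replace the finite count by a measure-theoretic one. Since the fibres of the quotient map $G(\lri)^2 \to G(\lri,m)^2$ are cosets of $G^m(\lri)\times G^m(\lri)$ and all have $\nu$-measure $|G(\lri,m)|^{-2}$, setting
$$\mu_m := \nu\bigl(\bigl\{(X,Y)\in G(\lri)^2 : \|XY-YX\|\leq q^{-m}\bigr\}\bigr)$$
yields $\#\{(x,y)\in G(\lri,m)^2 : xy=yx\} = |G(\lri,m)|^2\,\mu_m$, and therefore $c_m = |G(\lri,m)|\,\mu_m$. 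Combining with Lemma~\ref{lemma-index-dimension-formula}, which gives $|G(\lri,m)| = |G(\resfield)|q^{(m-1)d} = k_G(q)\,q^{md}$, this reads $c_m = k_G(q)\,q^{md}\,\mu_m$ for all $m\geq 1$; the case $m=0$ is handled separately, where $c_0 = 1$ since $G(\lri,0)$ is the trivial group.

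Substituting into $\zeta^{\cc}_{G(\lri)}(s) = \sum_{m\geq 0} c_m q^{-ms}$ gives
$$\zeta^{\cc}_{G(\lri)}(s) - 1 \;=\; k_G(q)\sum_{m\geq 1}\mu_m\,q^{-m(s-d)}.$$
To match the right hand side of the proposition, I would interchange summation and integration in $\sum_{m\geq 0}q^{-m(s-d)}\mu_m$. For each fixed $(X,Y)$, write $v = v(XY-YX) \in \{0,1,2,\ldots\}\cup\{\infty\}$; then the indicator of $\|XY-YX\|\leq q^{-m}$ equals $1$ exactly when $m\leq v$, so the pointwise contribution is the partial geometric sum $\sum_{m=0}^{v}q^{-m(s-d)} = \bigl[1-q^{-(v+1)(s-d)}\bigr]/\bigl[1-q^{-(s-d)}\bigr]$. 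Under the convention $\|0\|^{s-d}=0$, valid for $\mathrm{Re}(s-d)>0$, one has $q^{-v(s-d)} = \|XY-YX\|^{s-d}$ in both the finite and infinite cases, so this expression equals $[1-q^{-(s-d)}\|XY-YX\|^{s-d}]/[1-q^{-(s-d)}]$ pointwise. Integrating against $\nu$ produces
$$\sum_{m\geq 0}\mu_m\,q^{-m(s-d)} \;=\; \frac{1 - q^{-(s-d)}Z_{G(\lri)}(s-d)}{1-q^{-(s-d)}},$$
and subtracting the $m=0$ term $\mu_0 = 1$ followed by routine simplification gives $\sum_{m\geq 1}\mu_m q^{-m(s-d)} = (Z_{G(\lri)}(s-d)-1)/(1-q^{s-d})$, completing the identification.

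The main obstacle is essentially bookkeeping: accounting carefully for the trivial $m=0$ term (which produces the $-1$'s on both sides of the claim) and justifying the interchange of the geometric series with the integral. The latter is immediate from the uniform bound $\|XY-YX\|\leq 1$ on $G(\lri)^2$, which also supplies absolute convergence of $Z_{G(\lri)}(s-d)$ in the half-plane $\mathrm{Re}(s-d)>0$. All remaining ingredients---the conjugacy class counting identity, the reduction of the congruence condition to a valuation inequality, the index formula from Lemma~\ref{lemma-index-dimension-formula}, and the geometric series manipulation---are routine.
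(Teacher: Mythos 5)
Your proof is correct and follows essentially the same route as the paper's: the Burnside identity $cc(H)=|H|^{-1}|\{(x,y):xy=yx\}|$, the identification of the commuting condition modulo $\maxideal^m$ with the valuation inequality $\|XY-YX\|\leq q^{-m}$, the measure-theoretic transfer via fibres of $G(\lri)^2\to G(\lri,m)^2$, and Lemma~\ref{lemma-index-dimension-formula}. The only difference is cosmetic: the paper introduces an auxiliary series $Z'(s)=\sum_m q^{-ms}\nu(W_m)$ and relates it to $Z$ by the Abel-summation identity $Z(s)=Z'(s)(1-q^s)+q^s$ before substituting the count for $\nu(W_m)$, whereas you interchange the geometric series with the integral pointwise; the two computations are algebraically identical.
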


\begin{proof}

We define a function  $w: G(\lri)\times G(\lri)\ra \N$  giving
the depth of a commutator in the congruence  filtration: put
$w(x,y)=\max\{m\in\N: x^{-1}y^{-1}xy\in G^m(\lri)\}$ if
$x^{-1}y^{-1}xy\neq 1$, and $\infty$ otherwise. It is
straightforward to show that $w(x,y)=\min_{1\leq i,j\leq
n}\{v(xy-yx)_{ij}\}$.

We will need the following classical result for a finite group $H$: denoting by  $cc(H)$ the number of conjugacy classes in $H$,
we have (cf. \cite{dScc}) that
\begin{equation}\label{nonburnside}
cc(H)=|H|^{-1}\sum_{x\in H} |C_H(x)|= |H|^{-1}|\{(x,y)\in H\times H:xy=yx  \}|.
\end{equation}

Applying \eqref{nonburnside} to $G(\lri,m)$ it follows that for
all $m$, $c_m=|G(\lri)/G^m(\lri)|^{-1}e_m,$ where

\[e_m:=|\{({x},{y})\in G( \lri,m)\times G(\lri,m):
{x}^{-1}y^{-1}xy=1\}|.\]

Put

\[
\begin{split}
&W_m =\{({x},{y})\in G(\lri)\times G(\lri): w({x},{y})\geq m \},\\
&Z'_{G(\lri)}(s)=\sum_{m=0}^\infty q^{-ms}\nu(W_m).
\end{split}
\]
Then
\begin{align*}Z_{G(\lri)}(s)& = \sum_{m=0}^\infty q^{-ms}\nu(W_m \smallsetminus W_{m+1})=
Z'_{G(\lri)}(s)(1-q^s)+q^s.
\end{align*}
It is straightforward to  show that $\nu(W_m)=\nu(G^m(\lri)\times
G^m(\lri))e_m$. An elementary computation, using
Lemma~\ref{lemma-index-dimension-formula}, then gives

\begin{align*}
Z'_{G(\lri)}(s)&=\sum_{m=0}^{\infty} \nu (G^m(\lri)\times
G^m(\lri)) e_m q^{-ms}\\
&= \sum_{m=0}^{\infty} |G(\lri)/G^m(\lri)|^{-1}c_mq^{-ms}\\
&=1+k_G(q)^{-1}(\zeta_{G(\lri)}^{\cc}(s+d)-1),
\end{align*}

since $\nu(G^m(\lri)\times G^m(\lri))=|G(\lri)/ G^m(\lri)|^{-2}$. Proposition~\ref{proposition-counting-cc} now follows.
\end{proof}

\subsection{Proof of Theorem \ref{theorem-cc}}
Proposition~\ref{proposition-counting-cc} states that
\[
\zeta_{G(\lri)}^{\cc}(s)-1=\frac{k_G(q)(Z_{G(\lri)}(s-d)-1)}{1-q^{s-d}}.
\]
Applying Theorem~\ref{theorem-coord2} to
\[
Z_{G(\lri)}(s-d)=\int_{G(\lri)\times G(\lri)}
||\{(\mathbf{xy-yx})_{ij}\suchthat  1\leq i,j\leq d\} ||^{s-d}d\nu
\]
we obtain an integral on $K^{2d}$ with respect to the additive
Haar measure. Note that this converges for all $s\in \C$ with
$\mathrm{Re}(s)\geq d$. Finally we apply Theorem
\ref{theorem-model-theory} to this integral and this
renders the required expression for
$\zeta_{G(\lri)}^{\cc}(s)$.  For the last statement note that the
equality $Z_{G(\lri)}(s)=Z_{G(\lri')}(s)$, for $s\in \C$ satisfying
$\text{Re}(s)>\sigma_0$, implies that for any $m$ the groups
$G(\lri,m)$ and $G(\lri',m)$ have the same number of conjugacy
classes.

\section{Dimensions of Hecke modules of intertwining operators}\label{section-intertwining}

\subsection{Induced representations and intertwining operators}

Let $H$ be a finite group and let $Q_1,Q_2$ be subgroups of $H$.
Let
\[
\rho_i=\mathbf{1}_{Q_i}^H=\mathbb{C}[H/Q_i]
\]
denote the induced representation obtained by inducing the trivial
representation of $Q_i$ to $H$. The space $\Hom_H(\rho_1,\rho_2)$,
which we refer to as the Hecke (intertwining) module, of $H$-maps
between the corresponding representation spaces, is a bi-module
with a left $\mathrm{End}_H(\rho_1)$-action and right
$\mathrm{End}_H(\rho_2)$-action. It admits an algebraic
description as $\oplus_{\sigma \in \mathrm{Irr}(H)}
\Mat(n_{1,\sigma},n_{2,\sigma};\C)$, where $n_{i,\sigma}$ is the
multiplicity of an irreducible representation $\sigma$ of $H$ in
$\rho_i$, and a geometric description as the space of $Q_2$-$Q_1$
bi-invariant functions $\C[Q_2 \backslash H /Q_1]$. The latter
identification is obtained by interpreting a
$Q_2$-$Q_1$-bi-invariant function $\varphi$ as a summation kernel,
thus giving a $H$-invariant map $T_\varphi$ between the
corresponding representations:
\[
[T_\varphi(f)](hQ_2)=\sum_{g\in
H/Q_1}\varphi(Q_2h^{-1}gQ_1)f(gQ_1), \quad f\in\mathbb{C}[H/Q_1].
\]

In particular,
\begin{equation}\label{dim.equality}
\dim \C[Q_2 \backslash H /Q_1] = \dim \Hom_H(\rho_1,\rho_2)=
\sum_\sigma n_{1,\sigma}n_{2,\sigma}.
\end{equation}

The following Lemma is an analogue of \eqref{nonburnside}.

\begin{lemma}\label{lemma-be}
Let $H$ be a finite group and let $Q_1,Q_2$ be subgroups of $H$.
Then
\begindm
|\{Q_1hQ_2\suchthat h\in H\}|=\frac{1}{|Q_1||Q_2|}|\{(x,y)\in
H\times Q_2 \suchthat xyx^{-1}\in Q_1\}|.
\enddm
\end{lemma}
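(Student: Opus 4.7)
The plan is to apply the Burnside orbit-counting formula to the action of $Q_1 \times Q_2$ on $H$ defined by $(q_1,q_2)\cdot h = q_1 h q_2^{-1}$. First I would note that the orbit of $h$ under this action is precisely $\{q_1 h q_2^{-1} : q_1 \in Q_1,\; q_2 \in Q_2\} = Q_1 h Q_2$, so the orbits of $Q_1\times Q_2$ on $H$ are exactly the double cosets, and hence the left-hand side of the identity counts orbits.

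Next I would compute the fixed-point count. A pair $(q_1,q_2) \in Q_1 \times Q_2$ fixes $h\in H$ iff $q_1 h q_2^{-1} = h$, equivalently $q_1 = h q_2 h^{-1}$. Thus the set of incidences
$$F := \{(q_1,q_2,h)\in Q_1\times Q_2 \times H \suchthat (q_1,q_2)\cdot h = h\}$$
projects bijectively onto
$$\{(h,q_2)\in H \times Q_2 \suchthat h q_2 h^{-1} \in Q_1\},$$
since given $(h,q_2)$ with $h q_2 h^{-1}\in Q_1$ the coordinate $q_1 = hq_2 h^{-1}$ is forced and lies in $Q_1$. Burnside's lemma gives
$$|\{Q_1 h Q_2 \suchthat h\in H\}| \;=\; \frac{|F|}{|Q_1||Q_2|} \;=\; \frac{1}{|Q_1||Q_2|}\,\bigl|\{(x,y)\in H\times Q_2 \suchthat x y x^{-1}\in Q_1\}\bigr|,$$
after relabelling $(h,q_2)$ as $(x,y)$, which is exactly the claimed formula.

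There is essentially no obstacle: the argument is a one-line application of the orbit-counting lemma. The only point to be careful about is the convention in the $Q_1\times Q_2$-action (inverting the second factor), which ensures the orbits are the two-sided double cosets rather than some twisted variant, and that the resulting fixed-point condition matches the asymmetric condition $xyx^{-1}\in Q_1$ with $y$ constrained to $Q_2$ on the right-hand side.
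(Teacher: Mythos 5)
Your proof is correct and is essentially the same argument as the paper's: the paper counts the orbits of $Q_2$ acting on $Q_1\backslash H$ by summing stabilizer orders, which is the same double count of the incidence set $\{(x,y)\in H\times Q_2 : xyx^{-1}\in Q_1\}$ that your Burnside computation for the $Q_1\times Q_2$-action on $H$ produces. Both routes reduce the orbit count to $|\{(x,y)\in H\times Q_2 : xyx^{-1}\in Q_1\}|/(|Q_1||Q_2|)$, so there is nothing to add.
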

\begin{proof}
\begin{align*}
|\{Q_1hQ_2\suchthat h\in H\}|&=|Q_1\backslash H/Q_2|\\
&=\frac{1}{|Q_2|}\sum_{x\in Q_1\backslash H}|\stab_{Q_2}(x)|\\
&=\frac{1}{|Q_1||Q_2|}|\{(x,y)\in H\times Q_2 \suchthat Q_1 x=Q_1 xy\}|\\
&=\frac{1}{|Q_1||Q_2|}|\{(x,y)\in H\times Q_2 \suchthat
xyx^{-1}\in  Q_1\}|.
\end{align*}
\end{proof}

\subsection{Zeta functions associated with induced representations of Chevalley groups}

Let $G$ be a Chevalley group and let $\Sigma$ denote the set of
roots. Let $S$ be a closed set of roots, i.e.\ $\alpha, \beta \in
S$ and $\alpha+\beta \in \Sigma$ implies $\alpha+\beta \in S$. Let
$P_S$ be the standard parabolic subgroup of $G$ associated with
$S$, namely, the group generated by $\frak X_\alpha$, $\alpha \in
S$ (see Section~\ref{measure-transfer}). Let $\rho^S_{\lri,m}$ be
the complex valued permutation representation of $G(\lri)$ induced
from the action of $G(\lri)$ on the finite set
$G(\lri)/P_S(\lri)G^m(\lri)\simeq G(\lri,m)/P_S(\lri,m)$. To study
the family of representations $\rho^S_{\lri,m}$ ($S$ a closed set
of roots) it is natural to look at the spaces of intertwining maps
from $\rho^{S_1}_{\lri,m}$ to $\rho^{S_2}_{\lri,m}$. We have
\[
\Hom_{G(\lri)}\left(\rho^{S_1}_{\lri,m},
\rho^{S_2}_{\lri,m}\right) \simeq \mathbb{C}[P_{S_2}(\lri,m)
\backslash G(\lri,m)/P_{S_1}(\lri,m)],
\]
and denoting $b_{\lri,m}^{S_1,S_2}:=\dim_{\mathbb{C}}
\Hom_{G(\lri)}\left(\rho^{S_1}_{\lri,m},
\rho^{S_2}_{\lri,m}\right)$ we can define the zeta function
\[
\zeta^{S_1,S_2}_{G(\lri)}(s)=\sum_{m=0}^\infty
b_{\lri,m}^{S_1,S_2}q^{-ms}.
\]

In order to express $\zeta^{S_1,S_2}_{G(\lri)}(s)$ in terms of a definable integral we use Lemma~\ref{lemma-be}. Letting
\[ e^{S_1,S_2}_{\lri,m}=|\{({ x},{ y})\in
G(\lri,m)\times G(\lri,m) \suchthat { y}\in P_{S_2}(\lri,m), {
xyx}^{-1}\in P_{S_1}(\lri,m)\}|,
\]
we get
\begin{equation}\label{bSS}
b^{S_1,S_2}_{\lri,m}(G)=\frac{e^{S_1,S_2}_{\lri,m}}{|P_{S_1}(\lri,m)||P_{S_2}(\lri,m)|},
\end{equation}
where
\begin{equation}\label{order.of.parabolic}
|P_{S}(\lri,m)|=q^{m \dim P_{S}}\kGq{P_S}, \quad \mathrm{for ~
all}~m\geq 1,
\end{equation}
by Lemma~\ref{lemma-index-dimension-formula}.
\medskip

Let $\lambda_{S}$ be the function $G(\lri) \to
\naturals\cup\{\infty\}$ given by
\begindm
\lambda_{S}({ x})=\begin{cases}
\infty&\textrm{if}\ { x}\in P_{S}(\lri)\\
\max\{m\in\naturals\suchthat { x}\in
P_{S}(\lri)G^m(\lri)\}&\textrm{if}\ { x}\notin P_{S}(\lri).
\end{cases}
\enddm
We next define $w:G(\lri)\times G(\lri) \to
\naturals\cup\{\infty\}$ by
\begindm
w({ x},{ y})=\min\{\lambda_{S_2}({ y}),\lambda_{S_1}({
xyx}^{-1})\}.
\enddm

\begin{lemma}\label{definability-of-lambda} The functions $\lambda_S$ and $w$ are
$\cL_{\vf}$-definable.
\end{lemma}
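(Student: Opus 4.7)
The plan is to exhibit explicit $\cL_{\vf}$-formulae whose graphs realise the functions $\lambda_S$ and $w$. The key ingredients are uniform $\cL_{\vf}$-definitions of the sets $G(\lri)$, $P_S(\lri)$ and $G^m(\lri)$, the last with $m$ taken from the value group sort. Fixing the embedding of the Chevalley group into $\GL_d$ over $\Z$, the defining equations of $G$ and of the standard parabolic $P_S$ are polynomial relations with integer coefficients, so their $\lri$-points are cut out by these polynomial conditions together with $v(g_{ij}) \geq 0$ and $v(\det g) = 0$, which is an $\cL_{\vf}$-formula. Writing $I$ for the identity matrix, the formula
\[
\chi_m(g) \,:\; g \in G(\lri) \,\wedge\, \bigwedge_{1 \leq i,j \leq d} v(g_{ij} - \delta_{ij}) \geq m
\]
defines $G^m(\lri)$ uniformly in the value group variable $m$.

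For $\lambda_S$, the point is the elementary equivalence $\lambda_S(x) \geq m$ iff $x \in P_S(\lri) G^m(\lri)$, iff there exists $p \in P_S(\lri)$ with $p^{-1}x \in G^m(\lri)$. Since the coordinates of $p^{-1}x$ are rational functions of the matrix entries of $p$ and $x$ with denominator a power of $\det(p) \in \lri^\times$, the map $(p,x) \mapsto p^{-1}x$ is $\cL_{\vf}$-definable on $P_S(\lri) \times G(\lri)$. Hence the graph of $\lambda_S$ is defined by
\[
\bigl(x \in P_S(\lri) \wedge m = \infty\bigr) \,\vee\, \bigl(m < \infty \wedge \exists p\,[\,p \in P_S(\lri) \wedge \chi_m(p^{-1}x)\,] \wedge \neg \exists p\,[\,p \in P_S(\lri) \wedge \chi_{m+1}(p^{-1}x)\,]\bigr),
\]
where the existential quantifiers range over tuples in the valued field sort.

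The function $w$ is then immediate. Since the inverse and product maps on $G$ are given by rational expressions in the matrix entries with denominators supported on $\det \in \lri^\times$, the conjugate $xyx^{-1}$ is an $\cL_{\vf}$-definable function on $G(\lri) \times G(\lri)$. The minimum of two value-group-valued $\cL_{\vf}$-definable functions is itself $\cL_{\vf}$-definable via the standard disjunction
\[
w(x,y) = k \iff \bigl(\lambda_{S_2}(y) = k \wedge \lambda_{S_1}(xyx^{-1}) \geq k\bigr) \,\vee\, \bigl(\lambda_{S_1}(xyx^{-1}) = k \wedge \lambda_{S_2}(y) \geq k\bigr).
\]
Since each step involves only polynomial conditions over $\Z$ together with comparisons in the ordered value group with top element $\infty$, no substantive obstacle arises; the only care needed is to treat the infinite value $\lambda_S(x) = \infty$ as the separate case $x \in P_S(\lri)$, which is handled cleanly because $\cL_{\mathrm{Pres}\infty}$ contains an explicit symbol for $\infty$.
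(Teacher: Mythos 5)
Your argument is correct and takes essentially the same route as the paper's: both reduce membership in $G^m(\lri)$ to the condition $\min_{i,j} v\bigl((z-I)_{ij}\bigr)\geq m$ on matrix entries, express $\lambda_S(x)\geq m$ by quantifying over $p\in P_S(\lri)$ with $p^{-1}x\in G^m(\lri)$ (all polynomial over $\Z$ after clearing the unit determinant), and obtain $w$ as a definable minimum. The only ingredient of the paper's proof you omit is the brief compactness argument showing that the maximum defining $\lambda_S(x)$ is attained (so $\lambda_S(x)<\infty$ exactly when $x\notin P_S(\lri)$), but that concerns well-definedness of $\lambda_S$ rather than its definability.
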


\begin{proof} The Chevalley group comes with a specific embedding into
$\GL_d$, where $d=\dim G$ and hence $z \in G^m(\lri)$ if and only
if $m \le \min\{v\left((z-I)_{ij}\right) \suchthat 1 \le i,j, \le
d\}$. It follows that the maximum $m$ such that $z \in G^m(\lri)$
is equal to $\min_{i,j} v\left((z-I)_{ij}\right)$. Therefore,
\[
\lambda_S(x)= \min \{v\left((y^{-1}x-I)_{ij}\right) \suchthat y
\in P_S(\lri), 1\le i,j \le d\}.
\]
As the function $y \mapsto \max_{i,j} |(y^{-1}x-I)_{ij}|$ is
continuous on a compact set the maximum is achieved on
$P_S(\lri)$. Thus $\min \{v\left((y^{-1}x-I)_{ij}\right) \suchthat
y \in P_S(\lri), 1\le i,j \le d\}$ exists for all $x \in G(\lri)$
and hence $\lambda_S(x)$ is well defined and is clearly definable
by a formula of the value group sort. The definability of $w$
follows as well.
\end{proof}

For an algebraic subgroup $H$ of $G$ we denote its dimension by
$d_H$ and we write $k_H(q)=|H(\resfield)|/q^{d_H}$.

\begin{prop}\label{proposition-GLn}
\begindm
\zeta^{S_1,S_2}_{G(\lri)}(s)-1=\frac{\tqalpha{S_1,S_2}\left(Z^{S_1,S_2}_{G(\lri)}
(s-2 d_G + d_{P_{S_1}} + d_{P_{S_2}})-1\right)}{1-q^{s-2 d_G +
d_{P_{S_1}} + d_{P_{S_2}}}},
\enddm
where
\begindm
Z^{S_1,S_2}_{G(\lri)}(s):=\int_{G(\lri)\times G(\lri)}q^{-w(x,y)s}
d\nu,
\enddm
$\nu$ is the normalised Haar  measure on $G(\lri)\times G(\lri)$
and $\tqalpha{S_1,S_2}=\kGq{G}^2\kGq{P_{S_1}}^{-1}
\kGq{P_{S_2}}^{-1}$.

\end{prop}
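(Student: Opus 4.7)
The plan is to adapt the proof of Proposition~\ref{proposition-counting-cc}, using the depth function $w$ in place of the commutator depth used there. The key point is that the condition $w(x,y)\geq m$ is equivalent to explicit membership conditions on the images of $x$ and $y$ in the finite quotient $G(\lri,m)$, so that the integral $Z^{S_1,S_2}_{G(\lri)}(s)$ can be assembled from the counting sequence $(b^{S_1,S_2}_{\lri,m})$ that defines $\zeta^{S_1,S_2}_{G(\lri)}(s)$.

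For each $m\geq 0$, I would introduce the level set $W_m=\{(x,y)\in G(\lri)\times G(\lri)\suchthat w(x,y)\geq m\}$. Unpacking the definition of $\lambda_S$, one has $\lambda_S(z)\geq m$ iff $z\in P_S(\lri)G^m(\lri)$, iff the image $\bar{z}$ in $G(\lri,m)$ lies in $P_S(\lri,m)$. Hence $W_m$ is the preimage under $G(\lri)^2\to G(\lri,m)^2$ of the set $\{(\bar{x},\bar{y}):\bar{y}\in P_{S_2}(\lri,m),\,\bar{x}\bar{y}\bar{x}^{-1}\in P_{S_1}(\lri,m)\}$, which has cardinality $e^{S_1,S_2}_{\lri,m}$. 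Consequently $\nu(W_m)=e^{S_1,S_2}_{\lri,m}/|G(\lri,m)|^2$. Combining \eqref{bSS}, \eqref{order.of.parabolic} and Lemma~\ref{lemma-index-dimension-formula} applied to $G$, this simplifies for $m\geq 1$ to
\[
\nu(W_m)=\frac{b^{S_1,S_2}_{\lri,m}\,q^{-m(2d_G-d_{P_{S_1}}-d_{P_{S_2}})}}{\tqalpha{S_1,S_2}}.
\]

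Set $Z'(s):=\sum_{m=0}^\infty q^{-ms}\nu(W_m)$. Decomposing the integrand of $Z^{S_1,S_2}_{G(\lri)}(s)$ according to the value of $w$ (the set $\{w=\infty\}$ contributes nothing for $\mathrm{Re}(s)>0$) and telescoping, exactly as in the proof of Proposition~\ref{proposition-counting-cc}, gives
\[
Z^{S_1,S_2}_{G(\lri)}(s)-1=(1-q^s)\bigl(Z'(s)-1\bigr).
\]
Inserting the formula for $\nu(W_m)$ into the definition of $Z'(s)$ yields
\[
Z'(s)-1=\tqalpha{S_1,S_2}^{-1}\bigl(\zeta^{S_1,S_2}_{G(\lri)}(s+2d_G-d_{P_{S_1}}-d_{P_{S_2}})-1\bigr).
\]
Substituting $s\mapsto s-2d_G+d_{P_{S_1}}+d_{P_{S_2}}$ in the first identity and inserting the second then produces the claimed formula.

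The only delicate point is the identification carried out in the second paragraph: one must verify carefully that the three conditions ``$\lambda_S(z)\geq m$'', ``$z\in P_S(\lri)G^m(\lri)$'' and ``$\bar{z}\in P_S(\lri,m)$'' are genuinely equivalent, so that the continuous level set $W_m$ matches exactly the finite counting set of size $e^{S_1,S_2}_{\lri,m}$. Once this equivalence is secured, the remainder is routine manipulation of Haar measures and geometric series, tracking the dimension shifts coming from $|G(\lri,m)|=q^{md_G}\kGq{G}$ and $|P_{S_i}(\lri,m)|=q^{md_{P_{S_i}}}\kGq{P_{S_i}}$.
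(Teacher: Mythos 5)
Your proof is correct and follows essentially the same route as the paper's: define the level sets $W_m=\{w\ge m\}$, decompose $Z^{S_1,S_2}_{G(\lri)}(s)$ by telescoping against $Z'(s)=\sum_m\nu(W_m)q^{-ms}$, compute $\nu(W_m)=e^{S_1,S_2}_{\lri,m}\nu(G^m(\lri)\times G^m(\lri))$, and substitute via \eqref{bSS}, \eqref{order.of.parabolic} and Lemma~\ref{lemma-index-dimension-formula}. The only difference is cosmetic: you spell out the verification that $\nu(W_m)=e^{S_1,S_2}_{\lri,m}/|G(\lri,m)|^2$ (which the paper dismisses as ``readily shown''), and your identity $Z(s)-1=(1-q^s)(Z'(s)-1)$ is the paper's $Z(s)=Z'(s)(1-q^s)+q^s$ rearranged.
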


\begin{proof}

The proof is very similar to that of
Proposition~\ref{proposition-counting-cc}. Put
\[
\begin{split}
W_m &= \{({ x},{ y})\in G(\lri)\times G(\lri)\suchthat w({ x},{
y})\geq m\}, \\
 Z'(s)&=\sum_{m=0}^\infty
\nu(W_m)q^{-ms}.
\end{split}
\]
We have
\begin{align}
Z^{S_1,S_2}_{G(\lri)}(s)&=\int_{G(\lri)\times
G(\lri)}q^{-w(x,y)s} d\nu\nonumber\\
&=\sum_{m=0}^\infty \nu(W_m \smallsetminus
W_{m+1})q^{-ms}=Z'(s)(1-q^s)+q^s.\nonumber
\end{align}
It is readily shown that
$\nu(W_m)=e^{S_1,S_2}_{\lri,m}\nu(G^m(\lri)\times G^m(\lri))$, thus

\begin{align*}
Z'(s)&=\sum_{m=0}^\infty e^{S_1,S_2}_{\lri,m}\nu(G^m(\lri)\times G^m(\lri))q^{-ms}\\
&=1+\sum_{m=1}^\infty e^{S_1,S_2}_{\lri,m}\kGq{G}^{-2}q^{-2m\dim{G}-ms}\tag{by\ \text{Lemma}~\ref{lemma-index-dimension-formula}}\\
&=1+ \sum_{m=1}^\infty|P_{S_1}(\lri,m)||P_{S_2}(\lri,m)| b^{S_1,S_2}_{\lri,m} \kGq{G}^{-2}q^{-m(2\dim{G}+s)}\tag{by\ (\ref{bSS})}\\
&=1+ \sum_{m=1}^\infty \tqalpha{S_1,S_2}^{-1}  b_{\lri,m}^{S_1,S_2} q^{-m(2\dim{G}-\dim P_{S_1}-\dim P_{S_2}+s)}\tag{by\ (\ref{order.of.parabolic})}\\
&=1+\tqalpha{S_1,S_2}^{-1}\left(
\zeta^{S_1,S_2}_{G(\lri)}\left(s+2 d_G - d_{P_{S_1}} -
d_{P_{S_2}}\right)-1 \right),\nonumber
\end{align*}
which gives the desired result.
\end{proof}

\subsection{Proof of Theorem \ref{theorem-hecke}} Theorem \ref{theorem-hecke} follows by combining Proposition
\ref{proposition-GLn}, Lemma \ref{definability-of-lambda},
Theorem~\ref{theorem-coord2}, and Corollary \ref{cor-of-B} using
the convergence of $\zeta^{S_1,S_2}_{G(\lri)}(s)$ for all $s\in
\C$ with $\mathrm{Re}(s)\geq d_G$ for all $\lri$, in a similar way to the
proof of Theorem \ref{theorem-cc}.

\section{Some remarks}

\subsection{Extending the class of groups} We have not described all of the groups to which the results of this paper
apply. For example, if $G=\GL_n$ then using the embedding of $G$
in $\Mat_n$ one can apply Theorem \ref{theorem-model-theory}
directly and deduce that the conjugacy class zeta function depends
only on the residue field of $\lri$. Similarly, if $G$ is an
elliptic curve, then the coefficients of its conjugacy class zeta
function are merely the numbers of $\lri/\maxideal^m$-points of the
curve and depend only on the residue field. Similarly, the results
apply if $G$ is any standard parabolic or unipotent subgroup of a
Chevalley group.

\subsection{One zeta function for all primes} The main tool in
this paper is the transfer principle based on Model theory
(Theorem \ref{theorem-model-theory}), thereby a (possibly) large
set of primes is excluded. Nevertheless, in cases where explicit
calculations are tangible, it transpires that the zeta function is
much more rigid. If $G$ is either $\GL_2$ or $\GL_3$ then there
exists a rational function $R_G(X,Y) \in \mathbb{Q}(X,Y)$,
explicitly computed in \cite{AOPV}, such that
$\zeta^{\cc}_{G(\lri)}(s)=R_G(q,q^{-s})$ where $\lri$ is any
compact discrete valuation ring and $q=|\lri/\maxideal|$.

A similar phenomenon occurs for the zeta function of Hecke modules
of intertwining operators. If $G$ is either $\GL_2$ or $\GL_3$,
and $P,Q$ are any standard parabolics thereof, then there exists a
rational function $R_G^{P,Q}(X,Y) \in \mathbb{Q}(X,Y)$, the
coefficients of which are explicitly computed in \cite{OPV}, such
that $\zeta^{P,Q}_{G(\lri)}(s)=R_G^{P,Q}(q,q^{-s})$, where $\lri$
is any compact discrete valuation ring and $q=|\lri/\maxideal|$.

Another instance of this phenomenon can be found when $G$ is the
discrete Heisenberg group over $\lri$, that is, the group of upper
unitriangular matrices over $\lri$, with a residue field
$\resfield$ of cardinality $q$. The dimension of this group is 3,
and by Proposition~\ref{proposition-counting-cc},
$$\zeta_{G(\lri)}^{\cc}(s)=\frac{1+q^{3-s}Z_{G(\lri)}(s-3)}{1-q^{3-s}},$$
where
$$Z_{G(\lri)}(s)=\int_{\lri^4} |ab-cd|^s
d\mu=\frac{(1-q^{-1})(1-q^{-2})}{(1-q^{-1-s})(1-q^{-2-s})}.$$ Thus
$$\zeta_{G(\lri)}^{\cc}(s)=\frac{1+q^{-s}-q^{1-s}-q^{2-s}+q^{3-s}}{(1-q^{1-s})(1-q^{2-s})(1-q^{3-s})}.$$
The integral $Z_{G(\lri)}(s)$ is a classical example of Igusa's
zeta function, and the formula above can be found in various
places, for instance \cite{Igusa}.

\subsection{Euler products}

One can form an Euler product of the local conjugacy class zeta
functions to count conjugacy classes in a global object. Let $F$
be a global field, i.e.\ a number field or the field of rational
functions of a curve over a finite field. Let $\gri$ be its ring
of integers. For a finite place $v$ of $F$ let $F_v$ and $\gri_v$
be the completions with respect to $v$ of $F$ and $\gri$,
respectively. For an ideal $\mathcal{I} \lhd \gri$ denote
$N(\mathcal{I})=[\gri:\mathcal{I}]$. Let $G \le \GL_n$ be a
$\Z$-defined algebraic subgroup and let $c_{\mathcal{I}}$ stand
for the number of conjugacy classes in $G(\gri/\mathcal{I})$.
Define the (global) conjugacy zeta function to be
\[
\zeta_{G(\gri)}^{\cc}(s)=\sum_{\mathcal{I} \lhd \gri}
c_{\mathcal{I}} N(\mathcal{I})^{-s}.
\]
\begin{lemma} If $\gri$ and $G$ as above, and assuming $G$ has strong approximation, then
\[
\zeta_{G(\gri)}^{\cc}(s)=\prod_{\mathcal{P} \in
\mathrm{Spec}(\gri)}\zeta_{G(\gri_\mathcal{P})}^{\mathrm{cc}}(s).
\]
\end{lemma}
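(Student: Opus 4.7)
The plan is to prove the Euler factorization by establishing multiplicativity of the coefficients $c_{\mathcal{I}}$ with respect to the ideal $\mathcal{I}$, then reorganizing the sum over ideals as a sum over multi-indices of exponents. First I would write any nonzero ideal $\mathcal{I}\lhd\gri$ uniquely as a finite product $\mathcal{I}=\prod_{\mathcal{P}}\mathcal{P}^{n_\mathcal{P}}$ (with $n_\mathcal{P}=0$ for all but finitely many $\mathcal{P}$), so that $N(\mathcal{I})=\prod_\mathcal{P} N(\mathcal{P})^{n_\mathcal{P}}$.

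The key multiplicativity step is to show $c_{\mathcal{I}}=\prod_{\mathcal{P}}c_{\mathcal{P}^{n_\mathcal{P}}}$. By the Chinese Remainder Theorem, $\gri/\mathcal{I}\cong\prod_{\mathcal{P}}\gri/\mathcal{P}^{n_\mathcal{P}}$ as rings. Since $G$ is a $\Z$-defined algebraic subgroup of $\GL_n$, viewed as a functor from rings to groups it commutes with finite direct products, so
\[
G(\gri/\mathcal{I})\;\cong\;\prod_{\mathcal{P}}G(\gri/\mathcal{P}^{n_\mathcal{P}}).
\]
Conjugacy classes in a direct product of finite groups are exactly products of conjugacy classes in the factors, giving $c_{\mathcal{I}}=\prod_{\mathcal{P}}c_{\mathcal{P}^{n_\mathcal{P}}}$.

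Next I would identify the local factors with the completions. The canonical ring isomorphism $\gri/\mathcal{P}^n\cong\gri_\mathcal{P}/\mathcal{P}^n\gri_\mathcal{P}$ gives $G(\gri/\mathcal{P}^n)\cong G(\gri_\mathcal{P}/\mathcal{P}^n\gri_\mathcal{P})$. Strong approximation for $G$ ensures that the reduction map $G(\gri_\mathcal{P})\to G(\gri_\mathcal{P}/\mathcal{P}^n\gri_\mathcal{P})$ is surjective, so this quotient is precisely the local congruence quotient $G(\gri_\mathcal{P},n)$ whose number of conjugacy classes is the local coefficient $c_n$ entering $\zeta^{\cc}_{G(\gri_\mathcal{P})}(s)$. (This is in fact the only place strong approximation enters; for smooth group schemes, Hensel's lemma would suffice.)

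Combining these ingredients, and using absolute convergence of the right-hand Euler product in some right half-plane to justify the interchange of summations,
\[
\zeta^{\cc}_{G(\gri)}(s)=\sum_{\mathcal{I}\lhd\gri}c_{\mathcal{I}}N(\mathcal{I})^{-s}
=\sum_{(n_\mathcal{P})_\mathcal{P}}\prod_{\mathcal{P}}c_{n_\mathcal{P}}N(\mathcal{P})^{-n_\mathcal{P} s}
=\prod_{\mathcal{P}}\sum_{n\ge 0}c_n N(\mathcal{P})^{-ns}
=\prod_{\mathcal{P}\in\mathrm{Spec}(\gri)}\zeta^{\cc}_{G(\gri_\mathcal{P})}(s),
\]
which is the claim. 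The only genuine subtlety is verifying convergence so that the rearrangement of the sum over all ideals into a product of local series is legitimate; this reduces to bounding $c_n$ polynomially in $N(\mathcal{P})^n$, which follows from the trivial bound $c_n\le|G(\gri_\mathcal{P},n)|$ together with Lemma \ref{lemma-index-dimension-formula}, guaranteeing convergence in a half-plane $\mathrm{Re}(s)>\sigma$ for suitable $\sigma$.
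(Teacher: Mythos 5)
Your proof is correct and follows essentially the same route as the paper: decompose $G(\gri/\mathcal{I})$ as a product of the groups $G(\gri/\mathcal{P}_i^{r_i})$, deduce multiplicativity of the class numbers $c_{\mathcal{I}}$, and rearrange the Dirichlet series into an Euler product. The only cosmetic differences are that the paper passes through counting irreducible representations rather than conjugacy classes directly, and that you are more explicit than the paper about where strong approximation versus the Chinese Remainder Theorem enters and about the convergence needed to justify the rearrangement.
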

\begin{proof} By strong approximation
 we have an isomorphism
\[
G(\gri/\mathcal{I}) \isom G(\gri/\mathcal{P}_1^{r_1}) \times \dots
\times G(\gri/\mathcal{P}_k^{r_k}),
\]
where $\mathcal{I}=\mathcal{P}_1^{r_1}\dots\mathcal{P}_k^{r_k}$.
For the irreducible representations we have identifications
\[
\mathrm{Irr}\left(G(\gri/\mathcal{I})\right) =
\mathrm{Irr}\left(G(\gri/\mathcal{P}_1^{r_1})\right) \times \dots
\times \mathrm{Irr}\left(G(\gri/\mathcal{P}_k^{r_k})\right).
\]
The class numbers are therefore multiplicative and the result
follows.

\end{proof}

\subsection{Representation zeta functions} Let $G$ be a
representation rigid group, i.e. the number of $n$-dimensional
complex irreducible representations, denoted $r_n(G)$, is finite
for any $n\in\mathbb{N}$. Then
\[
\zeta^{\mathrm{rep}}_G(s):=\sum_{n=1}^{\infty}r_n(G)n^{-s}
\]
is the representation zeta function of $G$; see \cite{LL}. If $G$ is topological
then one counts continuous representations. We are tempted to pose
the following.\\

\begin{Questionnn} Let $G$ be a Chevalley group. Does there exist a
constant $N$ such that for any complete discrete valuation ring
$\lri$ with residue field of size $q$ and characteristic $p > N$,
the representation zeta function (of continuous representations)
$\zeta^{\mathrm{rep}}_{G(\lri)}(s)$ depends only on $q$ and not on $\lri$?
\end{Questionnn}

Hrushovski and Martin \cite{HM2007} have proved rationality results 
for Poincar\myacute{e} series which arise from the numbers of equivalence classes in 
definable families of equivalence relations on $\mathbb{Q}_p$ and 
count certain (twist-isoclasses of) representations of nilpotent groups. They use model-theoretic 
results on imaginaries in $\Q_p$ and in algebraically closed valued fields. It would be
interesting to know if their methods could be used to address uniformity in~$p$.

\end{document}